\setlist[enumerate]{leftmargin=.5in}
\setlist[itemize]{leftmargin=.5in}
\theoremstyle{plain}
\newtheorem{theorem}{Theorem}
\newtheorem{lemma}[theorem]{Lemma}
\newtheorem{proposition}[theorem]{Proposition}
\newtheorem{corollary}[theorem]{Corollary}
\newtheorem{remark}{Remark}
\newtheorem{hypothesis}{Hypothesis}
\def\mou{\color{black}}
\begin{document}

\title{Robust exploratory mean-variance problem with drift uncertainty}
\author{Chenchen Mou, Weiwei Zhang and Chao Zhou}
\maketitle

\begin{abstract}
    We solve a min-max problem in a robust exploratory mean-variance problem with drift uncertainty in this paper. It is verified that robust investors choose the Sharpe ratio with minimal $L^2$ norm in an admissible set. A reinforcement learning framework in the mean-variance problem provides an exploration-exploitation trade-off mechanism; if we additionally consider model uncertainty, the robust strategy essentially weights more on exploitation rather than exploration and thus reflects a more conservative optimization scheme. Finally, we use financial data to backtest the performance of the robust exploratory investment and find that the robust strategy can outperform the purely exploratory strategy and resist the downside risk in a bear market.
\end{abstract}

\begin{keywords}
  model uncertainty, exploratory mean-variance analysis, robustness, exploration and exploitation, min-max problem
\end{keywords}

\section{Introduction}
In this paper, we focus on an exploratory mean-variance problem with drift uncertainty. The exploratory version of the mean-variance problem mainly refers to \cite{WangHaoran2} and the concept of model uncertainty stems from \cite{JinHanqing2015}. The classical continuous-time mean-variance problem illustrates that the optimal strategy to balance the wealth state in different assets is represented by market parameters (mean return rate $\mu$, volatility matrix $\sigma$, interest rate $r$) and optimal Lagrangian multiplier $\omega$. Practical implementation of the mean-variance model directly relates to the level of market parameters, whereas in real financial markets accurate values of model parameters are unknown. It is common to estimate parameters through historical market data by a variety of calibration techniques \cite{Berestycki2002,Merton1980}; however, calibration results often disperse among empirical methods. As a result, it is quite hard to pick out a consensus estimation to match most of the market scenarios.

Recently, a novel approach is intentionally designed by Wang \cite{WangHaoran2,WangHaoran3} to solve the mean-variance portfolio problem via a reinforcement learning framework. More specifically, investors' decision processes are replaced by relaxed controls, and a new trade-off relationship between exploration and exploitation appears. Exploration means that investors are encouraged to explore the optimal strategy using distributional rules, and each feedback control is a classical control sampled from the distributional rule. Exploitation means that with repetitive sampling rounds, investors gradually recover the optimal strategy by sample distributions rather than unknown market parameters $\mu, \sigma$, then the optimization is based on the information of control samples. Accordingly, the classical mean-variance problem has converted to an exploratory mean-variance problem. For more details about exploratory problem settings, we refer readers to \cite{WangHaoran1}. The critical advantage of exploratory idea is that the convergence of control samples directly guides us how to optimize the portfolio without any knowledge about the accurate value of $\mu$ and $\sigma$; so that we are able to skip the troubles brought by calibration as well as estimation errors. \cite{WangHaoran2} shows that optimal distributional rule of control is Gaussian and the value function can be solved explicitly and represented by a set of redefined parameters. At the end, optimal parameter values are trained by a reinforcement learning algorithm and feeding the real market data.

The key algorithm ``ENT-MV" in \cite{WangHaoran2} implicates that, even though the portfolio optimization through the reinforcement learning algorithm does not directly involve $\mu$ and $\sigma$, new parameters in ``ENT-MV" algorithm to be optimized are merely recombination of original parameters $\mu$ and $\sigma$. Once the system is trained to be in its optimality, the algorithm essentially provides an ``optimal" estimation of $\mu$ and $\sigma$ meanwhile. The methodology behind the ``ENT-MV" algorithm is nothing but replacing the traditional parameter estimation by a learning-based one so that learned parameters guarantee the optimality of the portfolio problem. However, under the viewpoint of model uncertainty, this idea suffers the same drawback as the traditional statistical estimation: the estimation is completely driven by data, while data are not always effective, or even effective past data may wrongly predict the future.

In order to fill this gap, our purpose is to add model uncertainty into the exploratory mean-variance problem and find out the robust solution under model uncertainty. Model uncertainty is intrigued by the fact that investors often fail to have a complete knowledge about the model, herein we admit the uncertainty driven by unknown parameters (drift, volatility etc.) and attempt to consider the worst-case scenarios among all potential combinations of parameters in a confidence region. Optimal solutions of portfolio problems by choosing the worst market parameters are so-called robust solutions. Model uncertainty has been considered in pricing problem since \cite{Avellaneda1995,Lyons1995}, and it was \cite{Hansen2001} who first brought the idea into portfolio problems. Later on, \cite{ChenEpstein2002,Goldfarb2003} respectively extended robust problems to continuous-time and single-period models. Other related works consider various kinds of utility functions or model settings combining with uncertainty and robust solutions, \cite{Skiadas2003,Gundal2005,JinHanqing2015}. Notice that literatures above merely considered drift uncertainty. When volatility uncertainty is involved, we refer to \cite{LinRiedel2014,Matoussi2015,Tevzadze2013} for detailed description. It is known that the drift is the main source of uncertainty because the drift is the hardest part to be estimated precisely. In order to simplify the model and consider the most crucial factor, we only discuss the drift uncertainty in the current work.

We investigate on a robust exploratory mean-variance problem in this paper. It is reasonable to suspect that parameters calibration through market data are misspecified, so we add model uncertainty to the original problem in \cite{WangHaoran2}. Among all the unknown market parameters we only consider the drift uncertainty here and express it by risk premium $\varrho_t$. Our purpose is to find the ``worst" $\varrho_t$ in an admissible closed convex set and the ``best" control distribution based on the ``worst" scenario. We call it the robust solution of the exploratory mean-variance portfolio optimization problem. Our model setting inherits \cite{WangHaoran3} in exploratory part and \cite{JinHanqing2015} in drift uncertainty part. It was proved in \cite{WangHaoran3} that with an exploration term, the optimal control distribution is Gaussian and the value function can be solved explicitly. When robustness is induced, the exploratory optimization becomes a min-max problem, so we find out the saddle point $(\varrho_t^*, \pi_t^*)$ which can switch the $\min$ and $\max$ and solve the robust value function simultaneously. In this case, the ``worst" $\varrho_t^*$ coincidentally achieves the minimal $L^2$ norm in its admissible set. We further discuss the effect of the robust strategy comparing with misspecified purely exploratory strategies. Due to the appearance of the exploration term, the optimal exploratory strategy should make a balance between ``exploration" (trying new strategies to obtain information from a larger range) and ``exploitation" (optimize the main target of reducing the terminal variance). It is interesting to notice that a merely ``exploitation" targeted investor can improve the terminal variance result from the optimal exploration strategy by choosing a misspecified $\hat{\varrho}$ which is smaller than the genuine market scenario $\varrho$. This is essentially an adjustment of the weight between exploration and exploitation. The phenomenon also matches the behavior of a robust investor: a more conservative investor reduce his/her market viewpoint $\varrho$ to obtain more opportunity to reduce the terminal variance and emphasize exploitation rather than exploration. We verify the variance reduction effect by a numerical simulation and test the performance of robust investment by feeding different financial data and calibrating the parameters.

The paper is organized as follows. We introduce the exploratory mean-variance problem and induce drift uncertainty into the model {\mou in Section \ref{section41}}. Then the robust strategy, a min-max problem's solution and its associated saddle-point are given {\mou in Section \ref{section42}}. {\mou In Section \ref{section43} we discuss }the effect of the robust strategy. The parameter $\varrho$ is calibrated and the performance of the robust strategy with real market data is presented in Section \ref{section44}. {\mou In Section \ref{section45} we summarize all the results in the paper. Finally, in Appendix \ref{AppendixB}, we finish some technical proofs which were postponed in the previous sections.}

\section{Portfolio models} \label{section41}

Assume there are $d$ risky assets in the financial market. Given a filtered probability space $(\Omega, \mathcal{F}, \{\mathcal{F}_t\}_{t\geq 0}, \mathbb{P})$, we define a $d$-dimensional $\mathcal{F}_t$-adapted Brownian motion $W_t:=(W_t^1,W_t^2,\cdots,W_t^{d})'$, where $'$ stands for the matrix transpose. Assume the stock market $S_t:=(S_t^1,S_t^2,\cdots, S_t^d)'\in\mathbb R^d$ follows {\mou a} geometric Brownian motion
\begin{equation*}
  dS_t^i = \mu_t^iS_t^idt +S_t^i\sum_{i=1}^d\sigma^{ij}_tdW_t^j\quad \text{on $[0,T]$},
\end{equation*}
with $S_0^i:=s_0^i>0$, where {\mou $\sigma_t:=\{\sigma_t^{ij}\}_{1\leq i,j\leq d}\in\mathbb R^{d\times d}$ is a deterministic volatility matrix whose inverse exists, and {$\mu_t:=(\mu_t^1,\mu_t^2,\cdots,\mu_t^d)'\in\mathbb R^d$ is an $\mathcal{F}_t$-adapted random drift which brings uncertainty to the model.}} Let $r_t>0$ be a deterministic risk-free rate. The stock price can be rewritten in terms of a risk premium process $\varrho_t:=\sigma_t^{-1}(\mu_t-r_t(1,1,\cdots,1)')\in\mathbb R^d$. We transfer the model uncertainty into the uncertain risk premium $\varrho_t$ for convenience although the model uncertainty stems from $\mu_t$. An investor's control process $v_t\in\mathbb{R}^d$ is randomized to present exploration and its density function is given by $\pi_t(v)\in\mathcal{P}(\mathbb R^d)$ where $\mathcal{P}(\mathbb R^d)$ stands for the set of density functions of absolutely continuous probability measures with respect to the Lebesgue measure on $\mathbb R^d$. In this case, a discounted self-financing wealth process $X_t^\pi\in\mathbb R$ with its initial wealth state $x_0\in\mathbb R$ has the following dynamic
\begin{equation}
  dX_t^\pi = \left( \int_{\mathbb{R}^d} {\varrho_t}'\sigma_t v\pi_t(v) dv\right)dt + \sqrt{\int_{\mathbb{R}^d} v'{\sigma_t}'\sigma_tv\pi_t(v)dv}\;dW_t\quad\text{on $[0,T]$}.  \label{wealth}
\end{equation}
Following the setting of \cite{WangHaoran3}, without model uncertainty, the classical exploratory mean-variance problem is to solve the value function
\begin{equation}
  V(x_0,0) = \min_{{\{\pi_t\}_t}} \mathbb{E}\left[(X_T^\pi-\omega)^2+c\int_0^T\int_{\mathbb{R}^d}\pi_t(v)\ln\pi_t(v)dvdt\right]-(\omega-l)^2, \label{main0}
\end{equation}
where $\omega$ is the Lagrangian multiplier under the optimal control, $l$ is the default target of the wealth expectation at maturity, $c>0$ is the exploration intensity and the additional term\footnote{We denote it as entropy term henceforth.} $$\int_0^T\int_{\mathbb{R}^d}\pi_t(v)\ln\pi_t(v)dvdt<0$$ is the opposite of Shannon-entropy. Optimizing the exploration is the same as maximizing the Shannon-entropy, and thus minimizing this additional term in \eqref{main0}. Intuitively speaking, an larger $c$ means more exploration: in particular, $c=0$ reduces the minimization problem \eqref{main0} into a standard mean-variance problem, where the density function of the optimal control is degenerated, and the probability measure with respect to exploration is a Dirac measure. When $c$ is very large, exploitation is negligible; we only optimize the exploration term. In this case, the optimal density function is Gaussian because Gaussian distribution family maximizes the Shannon-entropy. In general, the entropy term in \eqref{main0} encourages an investor to explore among the admissible controls and diversify his/her feedback strategies. As a result, the exploratory mean-variance problem becomes a trade-off between exploitation and exploration.

The appearance of uncertainty forces investors to consider the worst case in a range of models although the optimal strategy is selected. In this case, the optimization problem becomes a min-max problem
\begin{equation}
  V(x_0,0) = {\min_{\{\pi_t\}_t}\max_{\{\varrho_t\}_t}\ } \mathbb{E}\left[(X_T^\pi-\omega)^2+c\int_0^T\int_{\mathbb{R}^d}\pi_t(v)\ln\pi_t(v)dvdt\right]-(\omega-l)^2.
  \label{main1}
\end{equation}

{\mou The following we make the precise assumptions on parameters and give admissible sets of $\{\pi_t\}_t$ and $\{\rho_t\}_t$ for the min-max problem \eqref{main1}:}
\begin{hypothesis}\hspace{1em} \\\vspace{-1em}
\begin{itemize}
  \item $\int_0^T |\mu^i_t|\; dt <\infty\;\  \mathbb{P}-a.s.$, \hspace{0.2em} $\int_0^T |\sigma^{ij}_t|^2\; dt <\infty\,\,\, \mathbb{P}-a.s.$, \hspace{0.2em} $\int_0^T |r_t|\;dt<\infty$, for $i,j=1,\cdots, d$.
 \item $\exists \epsilon>0$, $\forall t>0$, $\sigma_t\sigma'_t >\epsilon I_d$, where $I_d$ is the $d$-dimensional identity matrix.
 {\mou  \item Let $\Xi$ be a closed convex subset of $\mathbb R^{d\times d}\setminus\{0\}$; the admissible set $\mathfrak{\Xi}_t$ is the set of processes $\{\varrho_s\}_{s\in[t,T]}$ such that $\varrho_s\in \Xi$ for all $s\in[t,T]$.
  \item Let $A\subset\mathcal{P}(\mathbb R^d)$; the admissible set $\mathcal{A}_t$ is the set of processes $\{\pi_s\}_{s\in[t,T]}$ such that $\pi_s\in\Xi$ for all $s\in[t,T]$.}
\end{itemize}
\end{hypothesis}

\begin{remark} \label{convention}
  $\Xi$ is closed convex and $0\notin\Xi$ imply that all admissible $\varrho_s:=(\varrho_s^{1},\varrho_s^{2}\cdots,\varrho_s^{d})$ are either positive or negative {\mou for all $s\in[t,T]$. By convention we assume that $\varrho_s^{i}>0$ for all $s\in[t,T]$ and $i=1,\cdots,d$ if $\{\varrho_s\}_{s}\in\mathfrak{\Xi}_t$ for any $t\in[0,T]$. }This assumption is the same as the one in \cite{JinHanqing2015}.
\end{remark}

Our target is to solve \eqref{main1} and prove a saddle point property for the problem.

\section{Optimal solution to robust exploratory mean-variance problem}  \label{section42}

We derive an explicit result on a saddle point property for \eqref{main1} and solve the exploratory mean-variance problem with drift uncertainty in this section. The dynamic programming argument shows that
\begin{align*}
  V(t,x) &= {\min_{\{\pi_s\}_s\in\mathcal{A}_t}\max_{\{\varrho_s\}_s\in\mathfrak{\Xi}_t}} \mathbb{E}\left[(X_T^\pi-\omega)^2+c\int_t^T\int_{\mathbb{R}^d}\pi_s(v)\ln\pi_s(v)dvds| X_t^\pi=x \right]-(\omega-l)^2\\
  &={\min_{\{\pi_s\}_s\in\mathcal{A}_t}\max_{\{\varrho_s\}_s\in\mathfrak{\Xi}_t}} \mathbb{E}^{t,x} \left[V(t+\triangle t, X_{t+\triangle t}^\pi) +c\int_t^{t+\triangle t}\int_{\mathbb{R}^d}\pi_s(v)\ln\pi_s(v)dvds\right],
\end{align*}
so the HJB equation of the robust exploratory mean-variance problem is
\begin{equation}
\begin{aligned}
  &\min_{\pi\in A} \max_{\varrho\in\Xi} \left\{V_t + \int_{\mathbb{R}^d}\left[ \dfrac{1}{2}v'{\mou \sigma_t'\sigma_t }v V_{xx} + \varrho'{\mou \sigma_t} v V_x +c\ln\pi(v)\right]\pi(v)dv\right\}=0 \\
  & V(T,x) = (x-\omega)^2-(\omega-l)^2.
\end{aligned}\label{HJB4-0}
\end{equation}

We split our solution into two steps. First, for any fixed specific market scenario where the model uncertainty is absent, we solve a classical exploratory mean-variance problem with a fixed drift. Then we find a ``worst" scenario in the admissible set and prove that the optimal policy under the ``worst" scenario is an equilibrium pair as well as the solution of \eqref{HJB4-0}. Two steps are presented in details in Subsection \ref{ClassicHJB} and Subsection \ref{RobustSolution} respectively.
\subsection{Classic exploratory solution by a HJB approach} \label{ClassicHJB}

First we consider the classical solution of the exploratory problem without model uncertainty, which follows the arguments as \cite{WangHaoran3}. For any $\rho:=\{\rho_t\}_{t}\in \mathfrak{\Xi}_0$ and $\pi:=\{\pi_t\}_{t}\in \mathcal{A}_0$, we define
\begin{equation*}
  M(\varrho,\pi):=\mathbb{E}\left[(X_T^\pi-\omega)^2+c\int_0^T\int_{\mathbb{R}^d}\pi_t(v)\ln\pi_t(v)dvdt\right]-(\omega-l)^2.
\end{equation*}

\begin{proposition}[Theorem 1 in \cite{WangHaoran3}] \label{Theorem41}
 {\mou With any fixed $\varrho:=\{\varrho_t\}_t\in\mathfrak{\Xi}_0$,} the optimal density function $\pi_t^*(v;\varrho)$ with respect to problem \eqref{main0} is Gaussian, and the value function is obtained by
\begin{align*}
  M(\varrho,\pi^*(\cdot;\varrho))=&\dfrac{(x_0-l)^2}{\exp\left\{\int_0^T {\varrho_t}'\varrho_t dt\right\}-1} - \dfrac{c d}{2}\int_0^T\int_t^T {\varrho_s}'\varrho_s dsdt\\
  & + \dfrac{c}{2}\int_0^T \ln(\det({\sigma_t}'\sigma_t))dt - \dfrac{c dT}{2}\ln(\mathcal{\pi} c).
\end{align*}
\end{proposition}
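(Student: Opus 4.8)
The plan is to follow the HJB/verification route of \cite{WangHaoran3}. Since $\varrho$ is held fixed there is no model uncertainty, so \eqref{HJB4-0} loses its outer $\max_{\varrho}$ and becomes the single HJB equation
\[
  \min_{\pi\in A}\left\{V_t+\int_{\mathbb R^d}\Big[\tfrac12 v'\sigma_t'\sigma_t v\,V_{xx}+\varrho_t'\sigma_t v\,V_x+c\ln\pi(v)\Big]\pi(v)\,dv\right\}=0,\qquad V(T,x)=(x-\omega)^2-(\omega-l)^2 .
\]
First I would carry out the pointwise minimization over the density. For frozen $(t,x)$ the bracketed integral has the Gibbs form $\int(f\pi+c\pi\ln\pi)\,dv$ with $f(v)=\tfrac12 v'\sigma_t'\sigma_t v\,V_{xx}+\varrho_t'\sigma_t v\,V_x$, so by the Gibbs (Donsker--Varadhan) variational principle it is minimized by $\pi_t^*(v;\varrho)\propto\exp(-f(v)/c)$, with minimal value $-c\ln\int_{\mathbb R^d}e^{-f(v)/c}\,dv$. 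Provided $V_{xx}>0$ (to be confirmed a posteriori) the Hessian $V_{xx}\sigma_t'\sigma_t$ of $f$ is positive definite, so $\pi_t^*$ is a genuine Gaussian density; completing the square gives mean $-\tfrac{V_x}{V_{xx}}\sigma_t^{-1}\varrho_t$ and covariance $\tfrac{c}{V_{xx}}(\sigma_t'\sigma_t)^{-1}$. Evaluating the Gaussian normalizing integral and using $\varrho_t'\sigma_t(\sigma_t'\sigma_t)^{-1}\sigma_t'\varrho_t=\varrho_t'\varrho_t$ (here the invertibility of $\sigma_t$ enters) collapses the HJB to the scalar PDE
\[
  V_t-\frac{\varrho_t'\varrho_t}{2}\,\frac{V_x^2}{V_{xx}}+\frac{cd}{2}\ln V_{xx}-\frac{cd}{2}\ln(2\pi c)+\frac c2\ln\det(\sigma_t'\sigma_t)=0,\qquad V(T,x)=(x-\omega)^2-(\omega-l)^2 .
\]

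Next I would solve this PDE with the quadratic ansatz $V(t,x)=a(t)(x-\omega)^2+b(t)$, $a(T)=1$, $b(T)=-(\omega-l)^2$. Since $V_x=2a(x-\omega)$ and $V_{xx}=2a$, matching the $(x-\omega)^2$-coefficients yields the linear ODE $a'(t)=a(t)\,\varrho_t'\varrho_t$, hence $a(t)=\exp\{-\int_t^T\varrho_s'\varrho_s\,ds\}>0$, which in particular confirms $V_{xx}=2a>0$ and legitimizes the Gaussian above; the remaining $x$-independent terms give $b'(t)=\tfrac{cd}{2}\ln(\pi c)+\tfrac{cd}{2}\int_t^T\varrho_s'\varrho_s\,ds-\tfrac c2\ln\det(\sigma_t'\sigma_t)$ (the $\ln 2$'s from $\ln V_{xx}=\ln(2a)$ and $\ln(2\pi c)$ cancel), which integrates directly. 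Evaluating at $t=0$, $x=x_0$ gives $M(\varrho,\pi^*(\cdot;\varrho))=e^{-\int_0^T\varrho_s'\varrho_s ds}(x_0-\omega)^2-(\omega-l)^2-\tfrac{cdT}{2}\ln(\pi c)-\tfrac{cd}{2}\int_0^T\!\int_t^T\varrho_s'\varrho_s\,ds\,dt+\tfrac c2\int_0^T\ln\det(\sigma_t'\sigma_t)\,dt$. Finally, maximizing the $\omega$-dependent part $\lambda(x_0-\omega)^2-(\omega-l)^2$ with $\lambda:=e^{-\int_0^T\varrho_s'\varrho_s ds}\in(0,1)$ is a one-dimensional concave problem with optimizer $\omega^*=(l-\lambda x_0)/(1-\lambda)$, for which $\lambda(x_0-\omega^*)^2-(\omega^*-l)^2=\tfrac{\lambda}{1-\lambda}(x_0-l)^2=\dfrac{(x_0-l)^2}{\exp\{\int_0^T\varrho_s'\varrho_s ds\}-1}$, producing exactly the claimed formula.

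The step I expect to be the main obstacle is not the formal derivation but the verification argument: one must show that the candidate $V$ above is the true value function and that $\pi^*(\cdot;\varrho)$ is admissible and optimal. This requires that the Gaussian control $\pi_t^*$, whose mean and covariance $\tfrac{c}{2a(t)}(\sigma_t'\sigma_t)^{-1}$ are bounded on $[0,T]$ by boundedness of $a$ and by the Hypothesis on $\sigma$, generates a wealth process with $\mathbb E[\sup_{t}(X_t^{\pi^*})^2]<\infty$, so that the entropy integral is finite and the local-martingale part of $V(t,X_t^\pi)$ in Itô's formula is a true martingale after a standard localization; and that for any competing $\pi\in\mathcal A_0$ the same Itô computation combined with the pointwise HJB inequality yields $M(\varrho,\pi)\ge M(\varrho,\pi^*(\cdot;\varrho))$. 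Since these integrability estimates are precisely those established in \cite{WangHaoran3}, in the write-up I would exhibit the candidate $(V,\pi^*)$, verify the HJB inequality pointwise, and then invoke (or lightly adapt) the verification theorem there rather than reproduce the bookkeeping.
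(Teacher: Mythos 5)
Your proposal is correct and follows essentially the same route as the paper: pointwise Gibbs minimization over the density yielding the Gaussian $\pi_t^*$, substitution into the HJB to get the scalar PDE $V_t-\tfrac{\varrho_t'\varrho_t}{2}\tfrac{V_x^2}{V_{xx}}-\tfrac{cd}{2}\ln\tfrac{2\pi c}{V_{xx}}+\tfrac{c}{2}\ln\det(\sigma_t'\sigma_t)=0$, and a quadratic ansatz solved by ODEs (your $a(t)(x-\omega)^2+b(t)$ is just the completed-square form of the paper's $A(t)x^2+B(t)x+C(t)$). The one step you handle differently is the determination of $\omega$: the paper derives the wealth SDE under $\pi^*$, takes expectations to get a linear ODE for $\mathbb{E}[X_t^{\pi^*}]-\omega$, and imposes the mean constraint $\mathbb{E}[X_T^{\pi^*}]=l$, whereas you maximize the concave map $\omega\mapsto\lambda(x_0-\omega)^2-(\omega-l)^2$ coming from Lagrangian duality; since $\lambda=e^{-\int_0^T\varrho_s'\varrho_s\,ds}\in(0,1)$ both give $\omega=(l-\lambda x_0)/(1-\lambda)$ and the same value $\tfrac{(x_0-l)^2}{e^{\int_0^T\varrho_s'\varrho_s ds}-1}$, so this is a legitimate shortcut -- it avoids computing the controlled wealth dynamics at this stage, though the paper's explicit SDE computation is reused later in Theorem \ref{minmaxtheorem} and in Section \ref{section43}, so it is not wasted work there. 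Your closing remarks on verification and integrability match the paper's stance: it likewise presents only the formal derivation and leans on Theorem 1 of \cite{WangHaoran3} for the rigorous verification.
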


\begin{proof}

{\mou Fix $\varrho:=\{\varrho_t\}_t\in\mathfrak{\Xi}_0$.} {\mou For any fixed time $t\in[0,T]$}, the classical exploratory problem without model uncertainty has the optimal control density function

\begin{equation}
  \pi_t^*(v;\varrho) = \dfrac{\exp\left\{-\dfrac{1}{c}\left(\dfrac{1}{2}v'{\mou \sigma_t'\sigma_t} vV_{xx}+{\varrho_t'\mou\sigma_t} vV_x\right)\right\}}{\displaystyle\int_{\mathbb{R}^d}\exp\left\{-\dfrac{1}{c}\left(\dfrac{1}{2}v'{\mou\sigma_t'\sigma_t} vV_{xx}+{\mou\varrho_t'\sigma_t} vV_x\right)\right\} dv} \ , \label{explorOptPi}
\end{equation}
which follows a high-dimensional Gaussian distribution. Assume $\mathcal{V}$ is the Gaussian random variable whose probability density function at $t$ is {\mou $\pi_t^*(\cdot;\rho)$}. It is easy to verify that $\mathcal{V}\sim \mathcal{N}(v|\bm{q}(t), \bm{\Sigma}(t))$ where $\bm{q}(t) = -\sigma_t^{-1}\varrho_t\frac{V_x}{V_{xx}}$ and $\bm\Sigma(t) = ({\sigma_t}'\sigma_t)^{-1}\frac{c}{V_{xx}}$. More specifically, the optimal density function of the control process is
\begin{equation}
\begin{aligned}
  &\pi_t^*(v;\varrho) = (2\pi)^{-\frac{d}{2}}|\det\bm\Sigma(t)|^{-\frac{1}{2}}\exp\left\{-\dfrac{V_{xx}}{2c}\left(v-\bm{q}(t)\right)'{\sigma_t}'\sigma_t\left(v-\bm{q}(t)\right)\right\}\\
  &=\left(\dfrac{2\pi c}{V_{xx}}\right)^{-\frac{d}{2}}\sqrt{\det({\sigma_t}'\sigma_t)}\exp\left\{-\dfrac{V_{xx}}{2c}\left(v+\sigma_t^{-1}\varrho_t\dfrac{V_x}{V_{xx}}\right)'{\sigma_t}'\sigma_t\left(v+\sigma_t^{-1}\varrho_t\dfrac{V_x}{V_{xx}}\right)\right\}.
\end{aligned} \label{optimaldensity}
\end{equation}
Furthermore, the Shannon-entropy term at time $t$ is
\begin{equation*}
  -\int_{\mathbb{R}^d}{\mou\pi_t^*(v;\varrho)}\ln{\mou \pi_t^*(v;\varrho)}dv = \dfrac{1}{2}\ln\left((\dfrac{2\pi e c}{V_{xx}})^d\det({\sigma_t}'\sigma_t)^{-1}\right).
\end{equation*}

Plug \eqref{optimaldensity} back into \eqref{HJB4-0} with the fixed $\varrho$, \footnote{We take the following expectation of $\mathcal{V}$ or quadratic form of $\mathcal{V}$ under the optimal strategy's probability measure.}
\begin{align*}
  0 &= V_t + \dfrac{V_{xx}}{2}\mathbb{E}\left[(\mathcal{V}-\bm{q}(t))'{\sigma_t}'\sigma_t(\mathcal{V}-\bm{q}(t))\right]  -\dfrac{{\varrho_t}'\varrho_t V_x^2}{2V_{xx}}-\dfrac{c}{2}\ln\left(\left(\dfrac{2\pi e c}{V_{xx}}\right)^d\det({\sigma_t}'\sigma_t)^{-1}\right)\\
    &= V_t + \dfrac{V_{xx}}{2}\text{tr}({\sigma_t}'\sigma_t\bm\Sigma)-\dfrac{{\varrho_t}'\varrho_t V_x^2}{2V_{xx}}-\dfrac{c d}{2}\ln\dfrac{2\pi c}{V_{xx}}+\dfrac{c}{2}\ln(\det({\sigma_t}'\sigma_t))-\dfrac{c d}{2}\\
    &= V_t - \dfrac{{\varrho_t}'\varrho_t V_x^2}{2V_{xx}} - \dfrac{c d}{2}\ln\dfrac{2\pi c}{V_{xx}}+\dfrac{c}{2}\ln(\det({\sigma_t}'\sigma_t)).
\end{align*}

We guess the solution $V$ has the form $V(t,x)=A(t)x^2+B(t)x+C(t)$, so $\frac{V_x^2}{V_{xx}}=2A(t)x^2+2B(t)x+\frac{B^2(t)}{2A(t)}$ and it is clear to solve $A(t),B(t),C(t)$ by ODE systems and their solutions are
\begin{align*}
 A(t)= & \exp\left\{-\displaystyle\int_t^T {\varrho_s}'\varrho_s ds\right\}, \qquad
 B(t)= -2\omega\exp\left\{-\displaystyle\int_t^T {\varrho_s}'\varrho_s ds\right\},\\
 C(t)= & \omega^2\exp\left\{-\int_t^T {\varrho_s}'\varrho_s ds\right\}-(\omega-l)^2 - \dfrac{c d}{2}\int_t^T\int_s^T {\varrho_r}'\varrho_r drds   \\ &+\dfrac{c}{2}\int_t^T \ln(\det({\sigma_s}'\sigma_s))ds - \dfrac{c d}{2}\ln(\pi c)(T-t).
\end{align*}

Hence, the classical exploratory mean-variance problem has the following explicit solution for the value function
\begin{align*}
 &V(x,t) = (x-\omega)^2\exp\left\{-\int_t^T {\varrho_s}'\varrho_s ds\right\} - \dfrac{c d}{2}\int_t^T\int_s^T {\varrho_r}'\varrho_r drds \\
  &\qquad\qquad+ \dfrac{c}{2}\int_t^T \ln(\det({\sigma_s}'\sigma_s))ds - \dfrac{c d}{2}\ln(\pi c)(T-t) -(\omega-l)^2, \\
  &V_x = 2(x-\omega)\exp\left\{-\int_t^T {\varrho_s}'\varrho_s ds\right\},\qquad
  V_{xx} = 2\exp\left\{-\int_t^T {\varrho_s}'\varrho_s ds\right\}>0,\\
  &\bm{q}(t) = -\sigma_t^{-1}\varrho_t\frac{V_x}{V_{xx}}= -{\sigma_t}^{-1}\varrho_t(x-\omega),\\
  &\bm\Sigma(t) = ({\sigma_t}'\sigma_t)^{-1}\frac{c}{V_{xx}} = \frac{c}{2}({\sigma_t}'\sigma_t)^{-1}\exp\left\{\int_t^T {\varrho_s}'\varrho_s ds\right\}.
\end{align*}
A simple computation derives
\begin{align*}
 &\int_{\mathbb{R}^d} {\varrho_t}'\sigma_t v \pi^*_t(v;\varrho)dv = {\varrho_t}'\sigma_t\mathbb{E}[\mathcal{V}] = -{\varrho_t}'\varrho_t(x-\omega),\\
 &\int_{\mathbb{R}^d} v'{\sigma_t}'\sigma_t v \pi^*_t(v;\varrho)dv = \mathbb{E}[\mathcal{V}'{\sigma_t}'\sigma_t\mathcal{V}] = \mathbb{E}[(\mathcal{V}-\bm{q}(t))'{\sigma_t}'\sigma_t (\mathcal{V}-\bm{q}(t))] + \bm{q}(t)'{\sigma_t}'\sigma_t\bm{q}(t)\\
  &= \text{tr}({\sigma_t}'\sigma_t\bm\Sigma(t)) + (x-\omega)^2{\varrho_t}'{\sigma_t^{-1}}'{\sigma_t}'\sigma_t\sigma_t^{-1}\varrho_t = \dfrac{c d}{2}\exp\left\{\int_t^T {\varrho_s}'\varrho_s ds\right\} + (x-\omega)^2 {\varrho_t}'\varrho_t.
\end{align*}
The wealth dynamic \eqref{wealth} under the optimal control distribution \eqref{optimaldensity} becomes
\begin{align*}
  &dX_t^{\pi^*} = -{\varrho_t}'\varrho_t(X_t^{\pi^*}-\omega)dt + \sqrt{\dfrac{c d}{2}\exp\left\{\int_t^T {\varrho_s}'\varrho_s ds\right\} + (X_t^{\pi^*}-\omega)^2 {\varrho_t}'\varrho_t}\;dW_t,\\
 & X_t^{\pi^*} = x_0 - \int_0^t {\varrho_s}'\varrho_s(X_s^{\pi^*}-\omega)ds + \int_0^t \sqrt{\dfrac{c d}{2}\exp\left\{\int_s^T {\varrho_r}'\varrho_r dr\right\} + (X_s^{\pi^*}-\omega)^2 {\varrho_s}'\varrho_s}\;dW_s,
\end{align*}
\begin{align*}
 & \mathbb{E}[X_t^{\pi^*}] = x_0 - \int_0^t {\varrho_s}'\varrho_s(\mathbb{E}[X_s^{\pi^*}]-\omega)ds.
\end{align*}
{\mou The last equation above can be treated as an ODE for $\mathbb{E}[X_t^{\pi^*}]-\omega$}, whose solution provides the optimal Lagrangian multiplier $\omega=\frac{l\exp\left\{\int_0^T {\varrho_t}'\varrho_t dt\right\}-x_0}{\exp\left\{\int_0^T {\varrho_t}'\varrho_t dt\right\}-1}$. The value function at time 0 for the minimization problem \eqref{main0} is
\begin{align*}
  V(x_0,0) = M(\varrho,\pi^*(\cdot;\varrho))=&\dfrac{(x_0-l)^2}{\exp\left\{\int_0^T {\varrho_t}'\varrho_t dt\right\}-1} - \dfrac{c d}{2}\int_0^T\int_t^T {\varrho_s}'\varrho_s dsdt\\ &+ \dfrac{c}{2}\int_0^T \ln(\det({\sigma_t}'\sigma_t))dt - \dfrac{c dT}{2}\ln(\pi c).
\end{align*}
\end{proof}

Now we choose the worst $\varrho$ based on the solution of $M(\varrho,\pi^*(\cdot;\varrho))$. {\mou Regarding the representation formula of $M(\varrho, \pi^*(\cdot;\varrho))$, it is obvious that the worst case condition is given by for each $t\in[0,T]$
 \begin{equation}
  \varrho_t^* := \arg\min\limits_{\varrho_t\in \Xi} \Upsilon_t,\label{explorrho}
\end{equation}
where $\Upsilon_t:=\varrho_t'\varrho_t$. The minimum of $\Upsilon_t$ is attainable and it is denoted by $\Upsilon_t^*$. We note that $\Upsilon_t^*$ is strictly positive because $\Xi$ is a closed convex subset of $\mathbb R^{d\times d}\setminus \{0\}$. If $\varrho^*_t$ is given in \eqref{explorrho} for each $t\in[0,T]$, then $(\varrho^*, \pi^*(\cdot;\rho^*))$ is the solution of $\max\limits_{\varrho\in\mathfrak{\Xi}_0}\min\limits_{\pi\in\mathcal{A}_0} M(\varrho,\pi)$.}

\subsection{Robust solution and saddle point property} \label{RobustSolution}

Based on the solution of classical exploratory mean-variance problem, we inherit the same model setting and define a specific control distribution by
\begin{equation}
  \pi_t^0:=\pi_t^*(v;\varrho^*) \sim \mathcal{N}\left(v|-\sigma_t^{-1}\varrho_t^*(x-\omega), \dfrac{c}{2}e^{\int_t^T \Upsilon_s^* ds}({\sigma_t}'\sigma_t)^{-1}\right) =: \mathcal{N}(v|\bm{q}^0(t),\bm\Sigma^0(t)).
\end{equation}
Again, we assume $\mathcal{V}_0$ is a Gaussian random variable whose probability density function at time $t$ is $\pi_t^0$.

\begin{theorem}\label{minmaxtheorem}
  $(\varrho^*, \pi^*(\cdot;\varrho^*))$ defined in \eqref{explorOptPi} and \eqref{explorrho} is the solution of robust exploratory problem \eqref{main1} as well as a saddle point of $M(\varrho,\pi)$, i.e.,{\mou $\min\limits_{\pi\in\mathcal{A}_0}\max\limits_{\varrho\in\mathfrak{\Xi}_0} M(\pi,\varrho) = \max\limits_{\varrho\in\mathfrak{\Xi}_0}\min\limits_{\pi\in\mathcal{A}_0}M(\pi,\varrho)=M(\varrho^*,\pi^*(\cdot;\varrho^*))$.}
\end{theorem}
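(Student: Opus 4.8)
The plan is to prove the stronger, \emph{pointwise} saddle-point inequalities
\[
M(\varrho,\pi^0)\ \le\ M(\varrho^*,\pi^0)\ \le\ M(\varrho^*,\pi)\qquad\text{for all }\varrho\in\mathfrak{\Xi}_0,\ \pi\in\mathcal{A}_0,
\]
where $\pi^0=\pi^*(\cdot;\varrho^*)$, and then to read off the min--max identity by the usual weak-duality squeeze $\max_\varrho\min_\pi M\le\min_\pi\max_\varrho M$. Throughout, the Lagrangian multiplier $\omega$ and the target $l$ are fixed constants, so the term $-(\omega-l)^2$ plays no role in either extremization; I would also record once and for all that, since $\pi^0_t$ is Gaussian with covariance $\bm\Sigma^0(t)=\frac{c}{2}e^{\int_t^T\Upsilon_s^*\,ds}(\sigma_t'\sigma_t)^{-1}$ depending only on $\varrho^*$, $\sigma_t$ and $c$, its differential entropy is a deterministic function of $t$ alone, so $\int_0^T\!\int_{\mathbb R^d}\pi_t^0\ln\pi_t^0\,dv\,dt$ contributes a constant that does not depend on $\varrho$.

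The right-hand inequality is immediate: Proposition \ref{Theorem41}, applied with the fixed drift $\varrho^*$, states precisely that $\pi^*(\cdot;\varrho^*)$ minimizes $\pi\mapsto M(\varrho^*,\pi)$ over $\mathcal{A}_0$, hence $M(\varrho^*,\pi^0)=\min_{\pi\in\mathcal{A}_0}M(\varrho^*,\pi)\le M(\varrho^*,\pi)$. Combined with the observation already recorded after Proposition \ref{Theorem41} --- that $\varrho^*$, defined by the pointwise minimization \eqref{explorrho}, attains $\max_{\varrho\in\mathfrak{\Xi}_0}\min_{\pi\in\mathcal{A}_0}M(\varrho,\pi)$ --- this already gives $\max_\varrho\min_\pi M=M(\varrho^*,\pi^0)$ for free.

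The left-hand inequality is the real content, and the step I expect to be the main obstacle, since with $\pi^0$ frozen and $\varrho$ varying one can no longer invoke Proposition \ref{Theorem41}; instead one must recompute $M(\varrho,\pi^0)$ from scratch. By the entropy remark above it suffices to maximize $m_\varrho(T):=\mathbb{E}\big[(X_T^{\pi^0}-\omega)^2\big]$ over $\varrho\in\mathfrak{\Xi}_0$. Plugging the state-dependent feedback law $\pi^0$ into the wealth dynamics \eqref{wealth} with a generic risk premium $\varrho$ and evaluating the first two moments of the Gaussian $\mathcal{V}_0$ (exactly as in the computation after Proposition \ref{Theorem41}, but keeping $\varrho\ne\varrho^*$ in the drift) yields
\[
dX_t^{\pi^0}=-\varrho_t'\varrho_t^*\,(X_t^{\pi^0}-\omega)\,dt+\sqrt{\tfrac{cd}{2}e^{\int_t^T\Upsilon_s^*\,ds}+(X_t^{\pi^0}-\omega)^2\,\Upsilon_t^*}\;dW_t,
\]
and then It\^o's formula applied to $(X_t^{\pi^0}-\omega)^2$, followed by taking expectations, produces the linear ODE $m_\varrho'(t)=\big(\Upsilon_t^*-2\varrho_t'\varrho_t^*\big)m_\varrho(t)+\tfrac{cd}{2}e^{\int_t^T\Upsilon_s^*\,ds}$ with $m_\varrho(0)=(x_0-\omega)^2$.

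Now convexity enters. For each $t$, $\varrho_t^*$ minimizes $\|\cdot\|^2$ over the closed convex set, i.e.\ it is the projection of the origin onto that set, so the projection variational inequality gives $\varrho_t'\varrho_t^*\ge\Upsilon_t^*$ for every admissible $\varrho_t$; since also $m_\varrho\ge0$, the drift coefficient of the ODE satisfies $\Upsilon_t^*-2\varrho_t'\varrho_t^*\le-\Upsilon_t^*$, so $m_\varrho$ is a subsolution of the ODE obtained by setting $\varrho=\varrho^*$, whose solution is $m_{\varrho^*}$. A Gr\"onwall/comparison argument (multiply $m_\varrho-m_{\varrho^*}$ by $e^{\int_0^t\Upsilon_s^*\,ds}$) then gives $m_\varrho(t)\le m_{\varrho^*}(t)$ on $[0,T]$, hence $M(\varrho,\pi^0)\le M(\varrho^*,\pi^0)$, completing the saddle inequalities. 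Chaining $\min_\pi\max_\varrho M\le\max_\varrho M(\varrho,\pi^0)=M(\varrho^*,\pi^0)=\max_\varrho\min_\pi M\le\min_\pi\max_\varrho M$ then forces equality everywhere. The only loose end is the well-posedness of the wealth SDE under $(\varrho,\pi^0)$ and the finiteness of $m_\varrho(T)$, which is routine --- the drift coefficient is bounded above by $-\Upsilon_t^*<0$ and the diffusion grows at most linearly in $X_t^{\pi^0}$ --- and I would defer it to the appendix.
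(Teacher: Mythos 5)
Your overall architecture is the same as the paper's: both proofs establish the two saddle inequalities around the pair $(\varrho^*,\pi^0)$ with $\pi^0=\pi^*(\cdot;\varrho^*)$, both obtain the right-hand inequality from Proposition \ref{Theorem41} together with the $\arg\min$ characterization \eqref{explorrho}, and both reduce the left-hand inequality to the same SDE for $X^{\pi^0}$ under a generic $\varrho$ and the same linear ODE for $\mathbb{E}[(X_t^{\pi^0}-\omega)^2]$, closed by the projection inequality $\varrho_t'\varrho_t^*\ge\Upsilon_t^*$. Your one genuine methodological difference is that you compare sub/supersolutions of that ODE via Gr\"onwall instead of solving it in closed form; this lets you dispense with the explicit formula \eqref{equationNt} and with the auxiliary inequality $\int_0^T e^{\int_0^t f(s)\,ds}\,dt\le T e^{\int_0^T f(t)\,dt}$ that the paper invokes to show the exploration contribution is maximized at $\varrho^*$. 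That part is a clean simplification.

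There is, however, one discrepancy that amounts to a gap relative to the paper's own treatment. You declare at the outset that $\omega$ is a fixed constant, so that $-(\omega-l)^2$ and the initial condition $m_\varrho(0)=(x_0-\omega)^2$ drop out of the comparison. The paper does not do this: inside the proof it recomputes $\omega=\bigl(l\exp\{\int_0^T\varrho_t'\varrho_t^*\,dt\}-x_0\bigr)/\bigl(\exp\{\int_0^T\varrho_t'\varrho_t^*\,dt\}-1\bigr)$, i.e.\ the multiplier enforcing $\mathbb{E}[X_T^{\pi^0}]=l$ depends on the adversary's choice of $\varrho$. Under that convention both the initial condition of your ODE and the subtracted term $(\omega-l)^2$ vary with $\varrho$, the solutions $m_\varrho$ and $m_{\varrho^*}$ no longer start from the same point, and your comparison argument does not directly apply. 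In the paper this $\varrho$-dependence survives as the extra term $(x_0-l)^2\bigl(e^{\int_0^T\Upsilon_t^*\,dt}-1\bigr)/\bigl(e^{\int_0^T\varrho_t'\varrho_t^*\,dt}-1\bigr)^2$ in $M(\varrho,\pi^0)$, which must be (and is) separately shown to be maximized at $\varrho=\varrho^*$, again using $\varrho_t'\varrho_t^*\ge\Upsilon_t^*$. Your argument is complete only under the fixed-$\omega$ reading of $M$; to match the paper's statement you need either to justify freezing $\omega$ at its saddle-point value, or to carry the $\varrho$-dependence of $\omega$ through the ODE and handle this additional term explicitly.
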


\begin{proof}
For any market parameter $\varrho=\{\varrho_t\}_t\in \mathfrak{\Xi}_0$, the wealth dynamic is
\begin{align*}
 \int_{\mathbb{R}^d} {\varrho_t}'\sigma_t v \pi^0_t(v)dv &= {\varrho_t}'\sigma_t\mathbb{E}[\mathcal{V}_0] = -{\varrho_t}'\varrho_t^*(x-\omega),\\
  \int_{\mathbb{R}^d} v'{\sigma_t}'\sigma_t v \pi^0_t(v)dv &= \mathbb{E}[{\mathcal{V}_0}'{\sigma_t}'\sigma_t\mathcal{V}_0] \\
   &= \mathbb{E}[(\mathcal{V}_0-\bm{q}^0(t))'{\sigma_t}'\sigma_t (\mathcal{V}_0-\bm{q}^0(t))] + \bm{q}^0(t)'{\sigma_t}'\sigma_t\bm{q}^0(t)\\
   &= \text{tr}({\sigma_t}'\sigma_t\bm\Sigma^0(t)) + (x-\omega)^2{\varrho_t^*}'\varrho_t^*\\
   &= \dfrac{c d}{2}e^{\int_t^T \Upsilon_s^* ds} + (x-\omega)^2 \Upsilon_t^*,
\end{align*}
and
\begin{equation*}
  dX_t^{\pi^0} = -{\varrho_t}'\varrho_t^*(X_t^{\pi^0}-\omega)dt + \sqrt{\dfrac{c d}{2}e^{\int_t^T \Upsilon_s^* ds} + (X_t^{\pi^0}-\omega)^2 \Upsilon_t^*}\;dW_t.
\end{equation*}
Apply It\'{o}'s formula to $F_t=(X_t^{\pi^0}-\omega)^2$,
\begin{align*}
  dF_t =& \left(\left[-2{\varrho_t}'\varrho_t^*+\|\varrho_t^*\|^2\right]F_t+\dfrac{c d}{2}e^{\int_t^T \Upsilon_s^* ds}\right) dt + 2\sqrt{F_t^2\|\varrho_t^*\|^2+\dfrac{c d}{2}e^{\int_t^T \Upsilon_s^* ds}}\;dW_t, \\
  F_0 =& (x_0-\omega)^2.
\end{align*}
Let $N_t = \mathbb{E}[F_t]$, $a(t)= -2{\varrho_t}'\varrho_t^*+\|\varrho_t^*\|^2$, $b(t)=\frac{c d}{2}\exp\{\int_t^T \Upsilon_s^* ds\}$, then $N_t$ satisfies the ODE $dN_t = a(t)N_t dt + b(t)dt$ whose solution is
\begin{equation}
\begin{aligned}
   N_t =& N_0\exp\left\{\int_0^t a(s)ds \right\}+\exp\left\{\int_0^t a(s)ds \right\}\int_0^t b(s)\exp\left\{-\int_0^s a(r)dr \right\}ds\\
  =& (x_0-\omega)^2\exp\left\{\int_0^t -2{\varrho_s}'\varrho_s^*+\|\varrho_s^*\|^2 ds \right\}+\dfrac{c d}{2} \exp\left\{\int_t^T \|\varrho_s^*\|^2 ds \right\} \\&\exp\left\{\int_0^t -2({\varrho_s}'\varrho_s^*+\|\varrho_s^*\|^2)ds\right\} \int_0^t \exp\left\{\int_0^s 2({\varrho_r}'\varrho_r^*-\|\varrho_r^*\|^2) dr \right\} ds \label{equationNt}.
\end{aligned}
\end{equation}
We can use a similar argument as Proposition \ref{Theorem41} and obtain
\begin{equation*}
  \omega = \dfrac{l\exp\left\{\int_0^T {\varrho_t}'\varrho_t^* dt\right\}-x_0}{\exp\left\{\int_0^T {\varrho_t}'\varrho_t^* dt\right\}-1}.
\end{equation*}
To compute {\mou $M(\varrho,\pi^0)$}, the entropy term is
\begin{align*}
  &c\int_0^T\int_{\mathbb{R}^d}\pi_t^0(v)\ln\pi_t^0(v)dv dt = -\dfrac{c T}{2}\ln((2\pi e)^d) - \dfrac{c}{2}\int_0^T \ln(|\det\bm\Sigma^0(t)|)dt\\
  &= -\dfrac{c Td}{2}\ln(\pi e c)+\dfrac{c T}{2}\ln(|\det({\sigma_t}'\sigma_t)|)-\dfrac{c d}{2}\int_0^T\int_t^T\|\varrho_s^*\|^2 ds dt.
\end{align*}
The terminal preference term is
\begin{align*}
 & \mathbb{E}[(X_T^{\pi^0}-\omega)^2]-(\omega-l)^2 = \dfrac{(x_0-l)^2\left(\exp\left\{\int_0^T\|\varrho_t^*\|^2 dt\right\}-1\right)}{\left(\exp\left\{\int_0^T{\varrho_t}'\varrho_t^* dt\right\}-1\right)^2} \\ &+\dfrac{c d}{2}\exp\left\{\int_0^T -2({\varrho_t}'\varrho_t^*-\|\varrho_t^*\|^2) dt \right\}\int_0^T \exp\left\{\int_0^t 2({\varrho_s}'\varrho_s^*-\|\varrho_s^*\|^2) ds \right\} dt.
\end{align*}
{\mou $M(\varrho,\pi^0)$} is the sum of the above two terms:
\begin{align*}
  M(\varrho,\pi^0)=& \dfrac{(x_0-l)^2\left(\exp\left\{\int_0^T\|\varrho_t^*\|^2 dt\right\}-1\right)}{\left(\exp\left\{\int_0^T{\varrho_t}'\varrho_t^* dt\right\}-1\right)^2}-\dfrac{c Td}{2}\ln(\pi e c)+\dfrac{c T}{2}\ln(|\det({\sigma_t}'\sigma_t)|) \\
  &+\dfrac{c d}{2}\exp\left\{\int_0^T -2({\varrho_t}'\varrho_t^*-\|\varrho_t^*\|^2) dt \right\}\int_0^T \exp\left\{\int_0^t 2({\varrho_s}'\varrho_s^*-\|\varrho_s^*\|^2) ds \right\} dt\\
  &-\dfrac{c d}{2}\int_0^T\int_t^T\|\varrho_s^*\|^2 ds dt.
\end{align*}
If the market uncertainty is achieved by $\varrho_t^*$, specially,
\begin{align*}
  M(\varrho^*,\pi^0)=& \dfrac{(x_0-l)^2}{\exp\left\{\int_0^T\|\varrho_t^*\|^2 dt\right\}-1}-\dfrac{c Td}{2}\ln(\pi e c)+\dfrac{c T}{2}\ln(|\det({\sigma_t}'\sigma_t)|)+\dfrac{c Td}{2}\\
  &-\dfrac{c d}{2}\int_0^T\int_t^T\|\varrho_s^*\|^2 ds dt.
\end{align*}
Then
\begin{align*}
  & M(\varrho^*,\pi^0) - M(\varrho,\pi^0)\\
  =& \dfrac{(x_0-l)^2 \left[\left(\exp\left\{\int_0^T{\varrho_t}'\varrho_t^* dt\right\}-1\right)^2-\left(\exp\left\{\int_0^T\|\varrho_t^*\|^2 dt\right\}-1\right)^2\right]}{\left(\exp\left\{\int_0^T\|\varrho_t^*\|^2 dt\right\}-1\right)\left(\exp\left\{\int_0^T{\varrho_t}'\varrho_t^* dt\right\}-1\right)^2}\\
  &+\dfrac{c d}{2}\left[T-\exp\left\{\int_0^T -2({\varrho_t}'\varrho_t^*-\|\varrho_t^*\|^2) dt \right\}\int_0^T \exp\left\{\int_0^t 2({\varrho_s}'\varrho_s^*-\|\varrho_s^*\|^2) ds \right\} dt\right].
\end{align*}
Since the convexity of $\Xi$ guarantees that ${\varrho_t}'\varrho_t^*\geq \|\varrho_t^*\|^2$ for any $\varrho_t\in\Xi$, the first term is non-negative. Furthermore, $\int_0^T\exp\left\{\int_0^t f(s) ds\right\}dt \leq T\exp\left\{\int_0^T f(t) dt\right\}$ holds true for any positive function $f$. As a result, both two terms are non-negative so $M(\varrho^*,\pi^0) \geq M(\varrho,\pi^0)$ holds true for all {\mou $\varrho\in\mathfrak{\Xi}_0$.}

So far we have
\begin{equation}
{\mou  M(\varrho^*,\pi^0)\geq \max\limits_{\varrho\in\mathfrak{\Xi}_0} M(\varrho, \pi^0)\geq \min\limits_{\pi\in\mathcal{A}_0}\max\limits_{\varrho\in \mathfrak{\Xi}_0} M(\pi,\varrho)\geq \max\limits_{\varrho\in \mathfrak{\Xi}_0}\min\limits_{\pi\in\mathcal{A}_0}M(\pi,\varrho)=M(\varrho^*,\pi^*(\cdot;\varrho^*)) \label{saddle},}
\end{equation}
and actually $\pi^0=\pi^*(\cdot;\varrho^*)$, so all the inequalities in \eqref{saddle} become equalities. This directly induces that the robust exploratory mean-variance problem has the solution {\mou$(\varrho^*, \pi^*(\cdot;\varrho^*))$}, and this is also the saddle point of $M(\pi,\varrho)$.
\end{proof}

The solution of the robust exploratory portfolio optimization is proved to be related to a min-max problem. A saddle point pair {\mou$(\varrho^*, \pi^*(\cdot;\varrho^*))$} reaches the equilibrium of the robust investor's market opinion and his/her investment behavior. The robust investor always has a conservative attitude towards the mean-return rate and is likely to reduce the investment on risky assets. Next we will analyze the effect of adding the robustness to portfolio strategies.

\section{Effect of robust strategies}  \label{section43}

Assume the genuine Sharpe ratio in risky market to be $\hat{\varrho}:=\{\hat{\varrho}_t\}_{t\in[0,T]}$. The discounted risky asset prices follow the dynamic
\[
dS_t^i = S_t^i\sum_{i=1}^d\sigma_{ij}(t)\left(\hat{\varrho_t}^jdt+dW_t^j\right)\text{ on $[0,T]$ for $i=1,\cdots,d$.}
\]
Due to the model uncertainty, an investor cannot precisely estimate $\hat{\varrho}$; instead, a misspecified Sharpe ratio $\varrho=\{\varrho_t\}_{t\in [0,T]}$ is chosen to determine his/her strategies. The robust investor further adjusts $\varrho_t$ to $\varrho^*_t$ as his/her worst-case perspective of market risk premium, as shown in Theorem \ref{minmaxtheorem}. We have in all four mean-variance portfolio management cases to compare in the following paragraphs: a misspecified investor with neither exploration nor robustness \cite{ZhouLi2000}; a misspecified investor with no exploration but robustness \cite{JinHanqing2015}; a misspecified investor with exploration but no robustness \cite{WangHaoran3}; a misspecified investor with both exploration and robustness. For simplicity, we consider the Sharpe ratio {\mou $\varrho$} and volatility matrix {\mou $\sigma$} are constants in this section.

\subsection{Misspecification and robustness without exploration} \label{sec431}

Previous literature \cite{ZhouLi2000, JinHanqing2015} have provided complete results when exploration is not involved. The optimal strategy of a misspecified investor is $v_t=\sigma^{-1}\varrho(\omega-X_t^\varrho)$. It can be verified that $\omega=\frac{le^{\varrho'\hat{\varrho}T}-x_0}{e^{\varrho'\hat{\varrho}T}-1}$ and the optimal terminal wealth distribution under {\mou model misspecification} is
\begin{equation*}
  X_T^\varrho = l + \dfrac{(x_0-l)(\exp\{-\frac{1}{2}\varrho'\varrho T-\varrho'W_T\}-1)}{\exp\{\varrho'\hat{\varrho}T\}-1}.
\end{equation*}
We further know that $\mathbb{E}[X_T^\varrho]=l$ : under model misspecification, the mean of the terminal wealth {\mou remains} invariant; however, the variance deviates from the optimal one and $Var[X_T^\varrho]=\frac{(x_0-l)^2(e^{\varrho'\varrho T}-1)}{(e^{\varrho'\hat{\varrho}T}-1)^2}\geq \frac{(x_0-l)^2}{e^{\hat{\varrho}'\hat{\varrho}T}-1}=Var[X_T^{\hat{\varrho}}]$. It concludes that an investor with a misspecified estimation of the Sharpe ratio $\varrho$ will suffer from a larger terminal variance than the correct model $\hat{\varrho}$.

A robust investor always uses a smaller Sharpe ratio $\varrho^*$ (in the sense of $L^2$ norm) to replace his/her original estimation $\varrho$. A direct effect of robustness is to compare $Var[X_T^{\varrho^*}]$ with $Var[X_T^\varrho]$. Fix any dimension $j\in\{1,2,...,d\}$, we split the Sharpe ratio adjustment into three cases:
\begin{itemize}
  \item $\hat{\varrho_j}<\varrho_j^*<\varrho_j$ : the investor overestimates the Sharpe ratio; the robust strategy enables the misspecified one to approach the genuine risk premium (though insufficiently) and reduce the terminal variance. The robust strategy is superior to the misspecified one.
  \item $\varrho_j^*<\varrho_j<\hat{\varrho_j}$ : the investor underestimates the Sharpe ratio; the robustness exaggerates the deviation and thus increases the terminal variance. In this case, the robust strategy is inferior to the misspecified one.
  \item $\varrho_j^*<\hat{\varrho_j}<\varrho_j$ : the investor overestimates the Sharpe ratio but overreacts during the risk premium adjustment. Whether the variance can be reduced depends on the comparison of $dist(\varrho_j^*, \hat{\varrho_j})$ and $dist(\hat{\varrho_j}, \varrho_j)$. Specially, if $dist(\varrho_j^*, \hat{\varrho_j})=dist(\hat{\varrho_j}, \varrho_j)$, we have $Var[X_T^{\varrho^*}]<Var[X_T^{\varrho}]$.
\end{itemize}

\begin{figure}[tbhp]
\centering
\subfloat[Variance curve 1d $\hat{\varrho}=0.5$]{\includegraphics[width=0.5\linewidth]{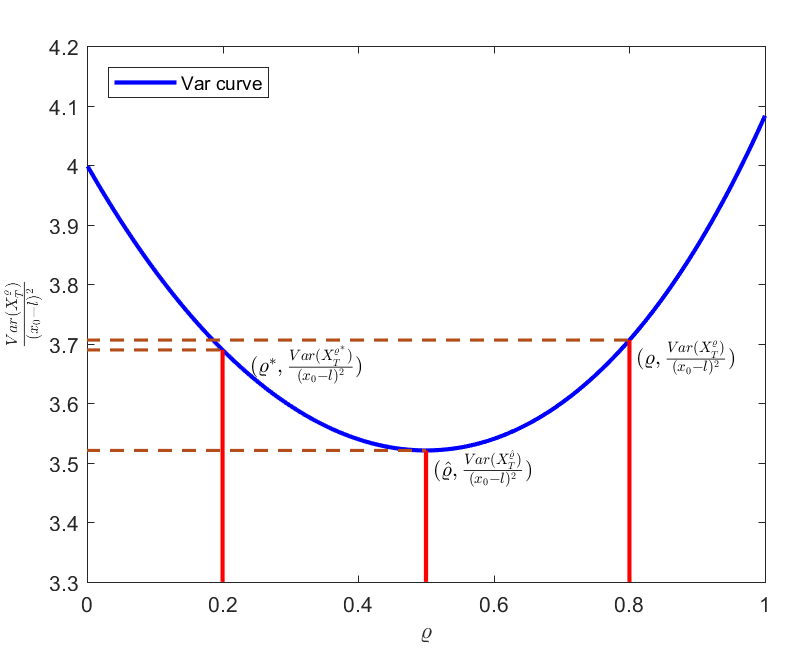}}
\subfloat[Variance contour 2d $\hat{\varrho}=(0.5,0.5)$]{\includegraphics[width=0.5\linewidth]{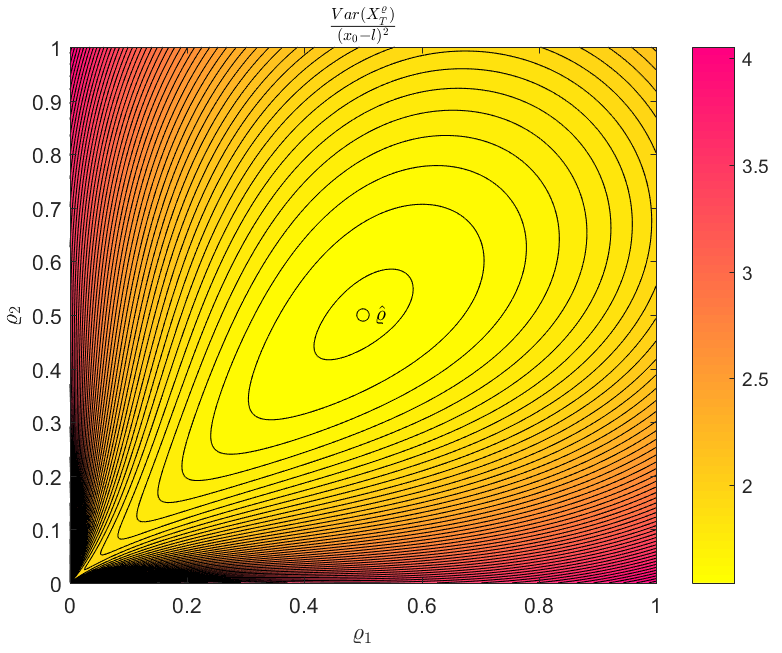}}
\caption{Variance reduction of misspecified risk premium}
\label{figure1-Chap4}
\end{figure}

The asymmetric variance structure mentioned in the last situation is illustrated as follows. Figure \ref{figure1-Chap4}(a) depicts the variance curve in the 1-dimension risk premium and parameters are set by $\varrho=0.8$, $\hat{\varrho}=0.5$, $\varrho^*=0.2$, $T=1$ and the curve is plotted for $\frac{Var[X_T^\varrho]}{(x_0-l)^2}=\frac{e^{\varrho'\varrho}-1}{(e^{\varrho'\hat{\varrho}}-1)^2}$. The risk premium reduction from $\varrho$ to $\varrho^*$ crosses the optimal value $\hat{\varrho}$ symmetrically, but the variance is still reduced slightly. The 2-dimensional variance contour of $\varrho$ in the unit square with the center $\hat{\varrho}=(0.5,0.5)$ is shown in Figure \ref{figure1-Chap4}(b). The variance surface reaches the basin when $\varrho=\hat{\varrho}$. When $\varrho$ deviates from $\hat{\varrho}$, the path from $\hat{\varrho}$ to the origin is flatter and the opposite direction is steeper. In conclusion, the variance contour (curve) leans to the side of smaller $\varrho$. Without any priori knowledge, reducing $\varrho$ is more likely to reduce variance than to increase variance. The robust strategy makes sense in accordance with the asymmetric variance structure.

\subsection{Misspecification and robustness with exploration}

It was shown in \cite{WangHaoran2} that the exploration does not affect the mean of the terminal wealth distribution, so $\mathbb{E}[X_T^\varrho]=l$ still holds. On the other hand, the variance is adjusted as the appearance of the exploration. Assume an investor precisely estimates the market risk premium to be $\hat{\varrho}$, the terminal variance is
\begin{equation}
\begin{aligned}
  Var[X_T^{\hat{\varrho}}] &= Var[X_T^{\hat{\varrho}}-\omega] \\
  &=\mathbb{E}[(X_T^{\hat{\varrho}}-\omega)^2] - \left(\mathbb{E}[X_T^{\hat{\varrho}}]-\omega\right)^2\\
  &=N_T^{\hat{\varrho}} - (l-\omega)^2 = \dfrac{(x_0-l)^2}{\exp\{\hat{\varrho}'\hat{\varrho}T\}-1} + \dfrac{cdT}{2}, \label{equa49}
\end{aligned}
\end{equation}
where $N_t^{\hat{\varrho}}$ and its solution is provided in \eqref{equationNt}. The terminal variance is increased by the term $\frac{cdT}{2}$ due to the exploration. If the investor selects a misspecified market scenario $\varrho$ instead of $\hat{\varrho}$, according to Proposition \ref{Theorem41}, the investor's policy is given as the Gaussian distribution
\begin{equation}
  \pi_t(v) \sim \mathcal{N}(-\sigma^{-1}\varrho'(x-\omega), \frac{c}{2}e^{\varrho'\varrho(T-t)}(\sigma'\sigma)^{-1}). \label{policy}
\end{equation}
We have the following Proposition to characterize the variance structure of $X_T^\varrho$, whose conclusion is slightly different from the case without exploration.

\begin{proposition}\label{Theorem43}
Assume the market risk premium is $\hat{\varrho}$ and an investor decides his/her strategy by a misspecified $\varrho$. Regard $Var[X_T^\varrho]$ as a function of $\varrho$, then $\exists \ k^*\in(\frac{1}{2}, 1)$ such that the terminal variance $Var[X_T^\varrho]$ attains a unique global minimum at $\varrho=k^*\hat{\varrho}$.
\end{proposition}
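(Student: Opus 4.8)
The plan is to reduce the $d$-dimensional minimization to a one-dimensional one along the ray $\mathbb R_{>0}\hat\varrho$, and then to analyze a single scalar function by elementary calculus. \emph{Step 1 (closed form).} Since misspecification does not change the terminal mean, $Var[X_T^\varrho]=\mathbb E[(X_T^\varrho-\omega)^2]-(\omega-l)^2$, and this is precisely the quantity computed inside the proof of Theorem~\ref{minmaxtheorem} after the substitution (investor's choice) $=\varrho$, playing the role of $\varrho^*$ there, and (true premium) $=\hat\varrho$, playing the role of $\varrho$ there, all parameters being constant. Evaluating the integral $\int_0^T e^{2(\hat\varrho'\varrho-\|\varrho\|^2)t}\,dt$ yields
\[
Var[X_T^\varrho]=\frac{(x_0-l)^2\big(e^{\|\varrho\|^{2}T}-1\big)}{\big(e^{\hat\varrho'\varrho T}-1\big)^2}+\frac{cd}{2}\cdot\frac{1-e^{-2(\hat\varrho'\varrho-\|\varrho\|^2)T}}{2(\hat\varrho'\varrho-\|\varrho\|^2)},
\]
the second term being read as $cdT/2$ when $\hat\varrho'\varrho=\|\varrho\|^2$.

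\emph{Step 2 (reduction to a ray).} For a fixed value of $\|\varrho\|$ the right-hand side depends on $\varrho$ only through $p:=\hat\varrho'\varrho$, and I would show it is strictly decreasing in $p$: the first term decreases as $e^{pT}$ grows, and for the second term one uses that $x\mapsto\frac{1-e^{-x}}{x}=\int_0^1 e^{-xt}\,dt$ is strictly decreasing on $\mathbb R$ while $p\mapsto 2(p-\|\varrho\|^2)$ is increasing. By Cauchy--Schwarz and the positivity convention of Remark~\ref{convention}, for a given norm the variance is minimal exactly when $\varrho$ is a positive multiple of $\hat\varrho$; hence any minimizer must lie on the ray, and it suffices to minimize $\phi(k):=Var[X_T^{k\hat\varrho}]$ over $k>0$. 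With $\lambda:=\|\hat\varrho\|^2T>0$ and $x(k):=2\lambda k(1-k)$ this becomes $\phi=\xi+\psi$, where $\xi(k)=(x_0-l)^2\frac{e^{k^2\lambda}-1}{(e^{k\lambda}-1)^2}$ and $\psi(k)=\frac{cdT}{2}\cdot\frac{1-e^{-x(k)}}{x(k)}$.

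\emph{Step 3 (localizing the minimum).} I would record the following facts, all consequences of the elementary lemmas that $G(y):=\frac{y}{1-e^{-y}}$ is strictly increasing and strictly convex on $(0,\infty)$, and that $g(x):=\frac{1-e^{-x}}{x}$ is strictly decreasing and strictly convex on $\mathbb R$: (i) $\xi'(k)$ has the sign of $G(k^2\lambda)-G(k\lambda)$, so $\xi$ strictly decreases on $(0,1)$, strictly increases on $(1,\infty)$, and $\xi'(1)=0$; (ii) $\psi$ strictly decreases on $(0,\tfrac12)$, strictly increases on $(\tfrac12,\infty)$, with $\psi'(\tfrac12)=0$ and $\psi'(1)=\tfrac{cdT\lambda}{2}>0$; (iii) $\psi$ is strictly convex on $\mathbb R$, being $g$ (convex, decreasing) composed with the concave $x(\cdot)$. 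Therefore $\phi$ strictly decreases on $(0,\tfrac12]$ and strictly increases on $[1,\infty)$, with $\phi(k)\to\infty$; a global minimizer exists and every critical point of $\phi$ lies in $(\tfrac12,1)$, which already gives $k^*\in(\tfrac12,1)$.

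\emph{Step 4 (uniqueness).} On $(\tfrac12,1)$ we have $\phi'(\tfrac12)=\xi'(\tfrac12)<0$ and $\phi'(1)=\psi'(1)>0$, so $\phi'$ vanishes somewhere in this interval; to get a unique minimizer it suffices to rule out an interior local maximum, i.e.\ to show $\phi''>0$ on $(\tfrac12,1)$. As $\psi''>0$ there, this reduces to controlling $\xi''$; writing $\xi=(x_0-l)^2e^{f}$ with $f:=\ln\frac{e^{k^2\lambda}-1}{(e^{k\lambda}-1)^2}$, it is enough that $f$ be convex on $(\tfrac12,1)$, and differentiating $f'(k)=\frac2k\big(G(k^2\lambda)-G(k\lambda)\big)$ turns $f''\ge0$ into a pointwise inequality in $(k,\lambda)$ assembled from $G$ and $G'$. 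Proving this inequality for every $\lambda>0$ --- equivalently $\xi''+\psi''>0$ on $(\tfrac12,1)$ --- is the main obstacle: near $k=1$ it is immediate from $\xi''(1)=(x_0-l)^2\frac{2\lambda G'(\lambda)}{e^\lambda-1}>0$ and $\psi''(1)>0$, and the rest is a careful but routine estimate using the convexity of $G$ and $g$. Granting it, $\phi'$ is strictly increasing on $(\tfrac12,1)$, hence has a single zero $k^*$; combined with Step 2, $\varrho=k^*\hat\varrho$ is the unique global minimizer of $Var[X_T^\varrho]$, with $k^*\in(\tfrac12,1)$.
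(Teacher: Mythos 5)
Your overall strategy coincides with the paper's: derive the closed form \eqref{Varterm}, use monotonicity in $\hat\varrho'\varrho$ at fixed norm to restrict to the ray $\varrho=k\hat\varrho$ (the paper's Step 2), split the variance into an exploration term minimized at $k=\tfrac12$ and a classical term minimized at $k=1$, and conclude by convexity along the ray. Two of your ingredients are in fact cleaner than the paper's. Your Step 3 bracketing (strict decrease of $\phi$ on $(0,\tfrac12]$, strict increase on $[1,\infty)$, with $\phi'(\tfrac12)=\xi'(\tfrac12)<0$ and $\phi'(1)=\psi'(1)>0$) pins every critical point inside $(\tfrac12,1)$ more convincingly than the paper's closing remark that $k^*$ is ``a weight between'' the two minimizers. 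And your strict convexity of $\psi$, obtained as the decreasing convex $g(x)=\frac{1-e^{-x}}{x}$ composed with the concave parabola $x(k)=2\lambda k(1-k)$, is slicker than the route through Lemma \ref{lemmaA1}.

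There is, however, a genuine gap exactly where you flag ``the main obstacle'': the strict convexity (equivalently, log-convexity) of $\xi(k)=(x_0-l)^2\frac{e^{k^2\lambda}-1}{(e^{k\lambda}-1)^2}$, on which uniqueness of the minimizer rests, is deferred as ``a careful but routine estimate'' and never carried out. It is not routine. The paper devotes the whole of Appendix \ref{AppendixB} (Theorem \ref{theoremA3}) to precisely this point, proving $(\ln g)''>0$ for $g(x)=\frac{e^{x^2}-1}{(e^x-1)^2}$ via a chain of auxiliary functions $h_1,h_2,u_0,u_1,F_0,F_1$ and the inequality $e^x\left[e^x(2-x)-(2+x)\right]+(e^x-1)^3\ge 0$; the difficulty is that the two pieces of $(\ln g)''$ nearly cancel ($h_1\downarrow\tfrac12$ and $h_2\downarrow -\tfrac12$ as $x\to 0^+$), so generic convexity facts about $G$ and $g$ do not close the estimate. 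You are right that the inequality must be verified for every $\lambda>0$ --- a point on which the paper's own reduction to the single function $g$ is itself too quick --- but until that computation is actually supplied, your argument establishes existence of a global minimizer lying in $(\tfrac12,1)$, not its uniqueness.
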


\begin{proof}
\textit{Step 1}.  Under {\mou the} misspecified risk premium $\varrho$, the investor's wealth process is given by
\begin{equation*}
  dX_t^\varrho = -\varrho'\hat{\varrho}(X_t^\varrho-\omega)dt + \sqrt{(X_t^\varrho-\omega)^2\varrho'\varrho+\dfrac{cd}{2}e^{\varrho'\varrho(T-t)}}dW_t.
\end{equation*}
We again represent $Var[X_t^\varrho]$ by the term of $N_t^\varrho=\mathbb{E}[(X_t^\varrho-\omega)^2]$. With the same argument as \eqref{equationNt}, $N_t^\varrho$ has {\mou its} dynamic and solution
\begin{align*}
 & dN_t^\varrho = (-2\varrho'\hat{\varrho}+\varrho'\varrho)N_t^\varrho dt + \dfrac{cd}{2}e^{\varrho'\varrho(T-t)}dt,\\
 & N_t^\varrho = \dfrac{cde^{\varrho'\varrho T}}{4(\varrho'\hat{\varrho}-\varrho'\varrho)}\left(e^{-\varrho'\varrho t}-e^{(\varrho'\varrho-2\varrho'\hat{\varrho})t}\right)+(x_0-\omega)^2e^{(\varrho'\varrho-2\varrho'\hat{\varrho})t}.
\end{align*}
Specially, when $\varrho=\hat{\varrho}$, the solution coincides with $N_T^{\hat{\varrho}}$. The optimal Lagrangian multiplier is given by $\omega = \frac{le^{\varrho'\hat{\varrho}T}-x_0}{e^{\varrho'\hat{\varrho}T}-1}$. Therefore, the variance under $\varrho$ is
\begin{equation}
\begin{aligned}
  Var[X_T^\varrho] &= N_T^\varrho - (l-\omega)^2 \\
  &=\dfrac{cd}{4(\varrho'\hat{\varrho}-\varrho'\varrho)}\left(1-e^{2(\varrho'\varrho-\varrho'\hat{\varrho})T}\right)+ (x_0-\omega)^2e^{(\varrho'\varrho-2\varrho'\hat{\varrho})T}-(l-\omega)^2\\
  &=\dfrac{cd(e^{2(\varrho'\varrho-\varrho'\hat{\varrho})T}-1)}{4(\varrho'\varrho-\varrho'\hat{\varrho})} +\left(\dfrac{(x_0-l)^2e^{\varrho'\hat{\varrho}T}}{e^{\varrho'\hat{\varrho}T}-1}\right)^2e^{(\varrho'\varrho-2\varrho'\hat{\varrho})T} -\left(\dfrac{x_0-l}{e^{\varrho'\hat{\varrho}T}-1}\right)^2\\
  &=\dfrac{cd(e^{2(\varrho'\varrho-\varrho'\hat{\varrho})T}-1)}{4(\varrho'\varrho-\varrho'\hat{\varrho})} +\dfrac{(x_0-l)^2(e^{\varrho'\varrho T}-1)}{(e^{\varrho'\hat{\varrho}T}-1)^2}. \label{Varterm}
\end{aligned}
\end{equation}
Again, $Var[X_T^\varrho]$ coincides with $Var[X_T^{\hat{\varrho}}]$ in \eqref{equa49} by taking limit $\varrho\rightarrow\hat{\varrho}$. Furthermore, a direct computation indicates that $Var[X_T^\varrho]$ is smooth at $\hat{\varrho}$. 

Next, we compute the gradient of $Var[X_T^\varrho]$ with respect to $\varrho$ to find the minimum point. Consider two terms in $Var[X_T^\varrho]$ separately,
\begin{equation*}
  \nabla_\varrho Var[X_T^\varrho] = \dfrac{cd}{2} \;\nabla_\varrho \left[\dfrac{e^{2(\varrho'\varrho-\varrho'\hat{\varrho})T}-1}{2(\varrho'\varrho-\varrho'\hat{\varrho})}\right] + (x_0-l)^2 \;\nabla_\varrho \left[\dfrac{e^{\varrho'\varrho T}-1}{(e^{\varrho'\hat{\varrho}T}-1)^2}\right].
\end{equation*}
The necessary condition of the exploration term to attain its minimum is
\begin{equation*}
  0=\nabla_\varrho \left[\dfrac{e^{2(\varrho'\varrho-\varrho'\hat{\varrho})T}-1}{2(\varrho'\varrho-\varrho'\hat{\varrho})}\right] =\left[\dfrac{Te^{2T(\varrho'\varrho-\varrho'\hat{\varrho})}}{\varrho'\varrho-\varrho'\hat{\varrho}} -\dfrac{e^{2T(\varrho'\varrho-\varrho'\hat{\varrho})}-1}{4(\varrho'\varrho-\varrho'\hat{\varrho})^2}\right](2\varrho-\hat{\varrho}),
\end{equation*}
the stationary point of which is $\varrho=\frac{1}{2}\hat{\varrho}$. Set the gradient of the classical variance term to be zero, i.e.
\begin{equation*}
  0 = \nabla_\varrho \left[\dfrac{e^{\varrho'\varrho T}-1}{(e^{\varrho'\hat{\varrho}T}-1)^2}\right] = \dfrac{2Te^{\varrho'\varrho T}}{(e^{\varrho'\hat{\varrho}T}-1)^2}\varrho - \dfrac{2Te^{\varrho'\hat{\varrho}T}(e^{\varrho'\varrho T}-1)}{(e^{\varrho'\hat{\varrho}T}-1)^3}\hat{\varrho},
\end{equation*}
the stationary point of which is $\varrho = \hat{\varrho}$. The stationary point of $Var[X_T^\varrho]$ is
\begin{equation}
  \varrho = \dfrac{cd\left(\dfrac{Te^{2T(\varrho'\varrho-\varrho'\hat{\varrho})}}{\varrho'\varrho-\varrho'\hat{\varrho}} -\dfrac{e^{2T(\varrho'\varrho-\varrho'\hat{\varrho})}-1}{4(\varrho'\varrho-\varrho'\hat{\varrho})^2}\right) + (x_0-l)^2\dfrac{2Te^{\varrho'\hat{\varrho}T}(e^{\varrho'\varrho T}-1)}{(e^{\varrho'\hat{\varrho}T}-1)^3}}{cd\left(\dfrac{Te^{2T(\varrho'\varrho-\varrho'\hat{\varrho})}}{\varrho'\varrho-\varrho'\hat{\varrho}} -\dfrac{e^{2T(\varrho'\varrho-\varrho'\hat{\varrho})}-1}{4(\varrho'\varrho-\varrho'\hat{\varrho})^2}\right)+(x_0-l)^2\dfrac{2Te^{\varrho'\varrho T}}{(e^{\varrho'\hat{\varrho}T}-1)^2}}\hat{\varrho}. \label{staypointcond}
\end{equation}
\eqref{staypointcond} indicates that the stationary point of $Var[X_T^\varrho]$ has the same direction as $\hat{\varrho}$.

\textit{Step 2}. For any given $\varrho=k\hat{\varrho}$, $k>0$, any rotation transformation from $\varrho$ to $\tilde{\varrho}$ where $\tilde{\varrho}\neq \varrho$ and $\|\tilde{\varrho}\| = \|\varrho\|$, we have $Var[X_T^{\tilde{\varrho}}]>Var[X_T^\varrho]$. This is because $\tilde{\varrho}'\hat{\varrho}<\varrho'\hat{\varrho}$, and thus $\varrho'\varrho-\varrho'\hat{\varrho}>\tilde{\varrho}'\tilde{\varrho}-\tilde{\varrho}'\hat{\varrho}$. The first term of \eqref{Varterm} is an increasing function of $\varrho'\varrho-\varrho'\hat{\varrho}$; the second term is a decreasing function of $\varrho'\hat{\varrho}$. As a result, $\tilde{\varrho}$ is always suboptimal to $\varrho$. Thus we can simplify the minimization problem by restricting on the line $\varrho=k\hat{\varrho}$ and find the optimal $k$ to minimize the variance instead.

\textit{Step 3}. Consider two terms in \eqref{Varterm} again.  $\frac{cd(e^{2(\varrho'\varrho-\varrho'\hat{\varrho})T}-1)}{4(\varrho'\varrho-\varrho'\hat{\varrho})}$ is a strictly convex function of $\varrho$. Therefore its stationary point $\varrho=\frac{1}{2}\hat{\varrho}$ attains its global minimum. For the second term restricts on the line $\varrho=k\hat{\varrho}$, $\frac{(x_0-l)^2(e^{\varrho'\varrho T}-1)}{(e^{\varrho'\hat{\varrho}T}-1)^2}=\frac{(x_0-l)^2(e^{k^2\|\hat{\varrho}\|^2 T}-1)}{(e^{k\|\hat{\varrho}\|^2T}-1)^2}$ is a strictly convex function of $k$ when $k>0$. The proof of two functions to be strictly convex is provided in Appendix \ref{AppendixB}. The stationary point $\varrho=\hat{\varrho}$ (or $k=1$) attains the global minimum of the second term. $Var[X_T^\varrho]$ is the sum of two strictly convex functions along $\varrho=k\hat{\varrho}$, so it is strictly convex as well and it has a unique global minimum. The stationary point \eqref{staypointcond} attains the global minimum and $k^*$ is the root of
\begin{equation*}
  \dfrac{cd\left(\dfrac{Te^{2T(k^2-k)\|\hat{\varrho}\|^2}}{(k^2-k)\|\hat{\varrho}\|^2} -\dfrac{e^{2T(k^2-k)\|\hat{\varrho}\|^2}-1}{4(k^2-k)^2\|\hat{\varrho}\|^4}\right) + (x_0-l)^2\dfrac{2Te^{k\|\hat{\varrho}\|^2T}(e^{k^2\|\hat{\varrho}\|^2 T}-1)}{(e^{k\|\hat{\varrho}\|^2T}-1)^3}}{cd\left(\dfrac{Te^{2T(k^2-k)\|\hat{\varrho}\|^2}}{(k^2-k)\|\hat{\varrho}\|^2} -\dfrac{e^{2T(k^2-k)\|\hat{\varrho}\|^2}-1}{4(k^2-k)^2\|\hat{\varrho}\|^4}\right)+(x_0-l)^2\dfrac{2Te^{k^2\|\hat{\varrho}\|^2 T}}{(e^{k\|\hat{\varrho}\|^2T}-1)^2}} = k.
\end{equation*}
Since the two terms of $Var[X_T^\varrho]$ has {\mou their} global minimum points $k=\frac{1}{2}$ and $k=1$ respectively, the global minimum point of $Var[X_T^\varrho]$ is a weight between its two nonnegative terms and thus $k^*\in(\frac{1}{2},1)$.
\end{proof}

Now we consider the effect on variance reduction of the robust strategy.

\begin{corollary}
For an investor taking the robust strategy to adjust his/her misspecified market viewpoint from $\varrho$ to $\varrho^*$, where $\varrho^*$ minimizes $L^2$ norm of $\varrho$ in the admissible set $\Xi$. Fix any individual asset $i$, then the conclusion is the same as the no exploration case in Subsection \ref{sec431}, except that the comparison object $\hat{\varrho}$ is replaced by $k^*\hat{\varrho}$:
\begin{itemize}
  \item $k^*\hat{\varrho_j}<\varrho_j^*<\varrho_j$ : the robust strategy helps reduce the terminal variance but not sufficiently. The robust strategy is superior to the misspecified one.
  \item $\varrho_j^*<\varrho_j<k^*\hat{\varrho_j}$ : the robustness exaggerates the deviation and thus increases the terminal variance. In this case, the robust strategy is inferior to the misspecified one.
  \item $\varrho_j^*<k^*\hat{\varrho_j}<\varrho_j$ : the robust strategy overreacts during the risk premium adjustment. Whether the variance can be reduced depends on the comparison of $dist(\varrho_j^*, k^*\hat{\varrho_j})$ and $dist(k^*\hat{\varrho_j}, \varrho_j)$. Specially, if $dist(\varrho_j^*, k^*\hat{\varrho_j})=dist(k^*\hat{\varrho_j}, \varrho_j)$, then we have $Var[X_T^{\varrho^*}]<Var[X_T^{\varrho}]$.
\end{itemize}
\end{corollary}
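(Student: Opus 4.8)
The plan is to derive the corollary directly from Proposition~\ref{Theorem43}, which already contains all the analytic content: it identifies $k^{*}\hat{\varrho}$, with $k^{*}\in(\tfrac12,1)$, as the \emph{unique} global minimizer of $\varrho\mapsto Var[X_{T}^{\varrho}]$, and (through Steps~2--3 of its proof) shows that along the ray $\{k\hat{\varrho}:k>0\}$ the scalar function $g(k):=Var[X_{T}^{k\hat{\varrho}}]$ is strictly convex, strictly decreasing on $(0,k^{*})$ and strictly increasing on $(k^{*},\infty)$. In other words, in the exploratory setting $k^{*}\hat{\varrho}$ plays exactly the role that the true premium $\hat{\varrho}$ played in Subsection~\ref{sec431}, so the three cases are obtained by copying the discussion there with $\hat{\varrho}$ replaced by $k^{*}\hat{\varrho}$. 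By Remark~\ref{convention} all admissible premia $\hat{\varrho},\varrho,\varrho^{*}$ are coordinatewise positive, and $\|\varrho^{*}\|\le\|\varrho\|$ because $\varrho^{*}$ minimizes the $L^{2}$ norm over $\Xi$; as in Step~2 of Proposition~\ref{Theorem43}, rotating to a premium of equal norm is always suboptimal, so for the coordinatewise comparison it is enough to work along the ray through $\hat{\varrho}$ and argue with $g$.

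First I would dispose of the two monotone cases, which need no computation. If $k^{*}\hat{\varrho}_{j}<\varrho^{*}_{j}<\varrho_{j}$, then $\varrho^{*}$ and $\varrho$ both lie on the increasing branch $(k^{*},\infty)$ of $g$, so $Var[X_{T}^{\varrho^{*}}]=g(\varrho^{*})<g(\varrho)=Var[X_{T}^{\varrho}]$, i.e.\ robustness strictly reduces the variance; ``not sufficiently'' merely records that $\varrho^{*}_{j}$ still overshoots the optimum $k^{*}\hat{\varrho}_{j}$, so the value $g(k^{*})$ is not reached. Symmetrically, if $\varrho^{*}_{j}<\varrho_{j}<k^{*}\hat{\varrho}_{j}$ both lie on the decreasing branch $(0,k^{*})$, hence $g(\varrho^{*})>g(\varrho)$ and the robust strategy is inferior. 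In the remaining case $\varrho^{*}_{j}<k^{*}\hat{\varrho}_{j}<\varrho_{j}$ the point $\varrho^{*}$ lies on the decreasing branch and $\varrho$ on the increasing one, so the sign of $g(\varrho^{*})-g(\varrho)$ depends on how far each is from $k^{*}\hat{\varrho}_{j}$; this is the first assertion, and it too is immediate from Proposition~\ref{Theorem43}.

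The only genuine work is the tie-break: when $\mathrm{dist}(\varrho^{*}_{j},k^{*}\hat{\varrho}_{j})=\mathrm{dist}(k^{*}\hat{\varrho}_{j},\varrho_{j})$, equivalently $\varrho^{*}=(k^{*}-s)\hat{\varrho}$ and $\varrho=(k^{*}+s)\hat{\varrho}$ for some $s\in(0,k^{*})$, one must show $g(k^{*}-s)<g(k^{*}+s)$. Using $g'(k^{*})=0$ one writes
\[
g(k^{*}+s)-g(k^{*}-s)=\int_{0}^{s}\bigl(g'(k^{*}+\tau)+g'(k^{*}-\tau)\bigr)\,d\tau ,
\]
so it suffices that the integrand be nonnegative on $(0,s)$, for which $g'''\ge0$ on $(0,2k^{*})$ is a convenient sufficient condition. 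Decompose $g=g_{0}+h$ as in \eqref{Varterm}, with classical term $g_{0}(k)=\dfrac{(x_{0}-l)^{2}(e^{k^{2}\|\hat{\varrho}\|^{2}T}-1)}{(e^{k\|\hat{\varrho}\|^{2}T}-1)^{2}}$ and exploration term $h(k)=\dfrac{cd}{4\|\hat{\varrho}\|^{2}}\cdot\dfrac{e^{2(k^{2}-k)\|\hat{\varrho}\|^{2}T}-1}{k^{2}-k}$. The favorable new ingredient is that $h$ is an increasing function of $u=k^{2}-k$ (since $u\mapsto(e^{\alpha u}-1)/u$ is strictly increasing for $\alpha>0$), and $k^{*}>\tfrac12$ forces $u(k^{*}+\tau)>u(k^{*}-\tau)$, so $h$ alone already satisfies $h(k^{*}+\tau)>h(k^{*}-\tau)$ and contributes exactly the ``leans toward smaller $\varrho$'' asymmetry visible in Figure~\ref{figure1-Chap4}.

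I expect the main obstacle to be controlling the classical term $g_{0}$, which is subtler here than in Subsection~\ref{sec431}: its own minimizer is $k=1>k^{*}$, so $g_{0}$ is still strictly decreasing at $k^{*}$ and on its own works \emph{against} the asymmetry for small $s$; one cannot argue termwise but must exploit the optimality balance $g_{0}'(k^{*})=-h'(k^{*})$ and then verify that the combined integrand $g'(k^{*}+\tau)+g'(k^{*}-\tau)$ (equivalently $g_{0}'''+h'''\ge0$) stays nonnegative on the relevant interval. This is a single-variable estimate in $k$ with the explicit formulas above, of the same flavour as the strict-convexity computations already deferred to Appendix~\ref{AppendixB}, and I would carry it out there as well. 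Once it is in hand, all three bullet points follow and the proof is complete.
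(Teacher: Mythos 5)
Your overall route is the same as the paper's: the corollary is stated without a separate proof and is meant to follow immediately from Proposition \ref{Theorem43}, namely that along the ray $\{k\hat{\varrho}:k>0\}$ the map $k\mapsto Var[X_T^{k\hat{\varrho}}]$ is strictly convex with unique minimizer $k^*$, so the first two bullets are just the statement that two points on the same monotone branch compare in the obvious way, and the third bullet is the straddling case. Your treatment of those two bullets, and your appeal to Step 2 of Proposition \ref{Theorem43} to justify working on the ray, is exactly what the paper intends.

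The one place where you go beyond the paper is also where your argument is incomplete: the tie-break claim $g(k^*-s)<g(k^*+s)$ when the two distances to $k^*\hat{\varrho}$ are equal. You correctly observe that the exploration term alone satisfies the asymmetry (since $u=k^2-k$ and $k^*>\tfrac12$ give $u(k^*+\tau)>u(k^*-\tau)$), that the classical term works in the opposite direction because its own minimizer is $k=1>k^*$, and that a sufficient condition for the combined inequality is $g'''\geq 0$ near $k^*$. But you stop there: the condition is never verified, and it is not a throwaway technicality. Since $g'(k^*)=0$ and the integrand $g'(k^*+\tau)+g'(k^*-\tau)$ vanishes to first order at $\tau=0$, the expansion gives $g(k^*+s)-g(k^*-s)=\tfrac{s^3}{3}g'''(k^*)+o(s^3)$, so for small $s$ the claim is essentially \emph{equivalent} to $g'''(k^*)\geq 0$, and whether that holds uniformly in the parameters $c,d,T,\|\hat{\varrho}\|,(x_0-l)^2$ is exactly the open computation. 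To be fair, the paper does not supply this estimate either: it justifies the analogous assertion in Subsection \ref{sec431} only by the numerically observed asymmetry of the variance curve in Figure \ref{figure1-Chap4}, and the appendix proves only strict convexity of the two terms, not any third-derivative bound. So your proposal correctly isolates the missing step and proposes a workable reduction, but as written the third bullet's quantitative conclusion remains unproven.
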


Figure \ref{figure2-Chap4} depicts the 2-dimensional variance contour of the exploratory mean-variance problem where the parameters are given by $\hat{\varrho}=(0.3,0.6)$, $c=0.5$, $T=1$, $l-x_0=0.3$. The red dot in Figure \ref{figure2-Chap4} is the optimal solution of the terminal variance term $\hat{\varrho}$, the blue dot is the optimal solution of the additional exploratory term in the terminal variance, whereas the minimal variance point is the green dot and $k^*\approx 0.683$ in this case. The dashed line going through the origin and $\hat{\varrho}$ always leads to the minimal variance direction and we can simplify the problem into merely searching this line.

\begin{figure}[tbhp]
\centering
\includegraphics[width=0.5\linewidth]{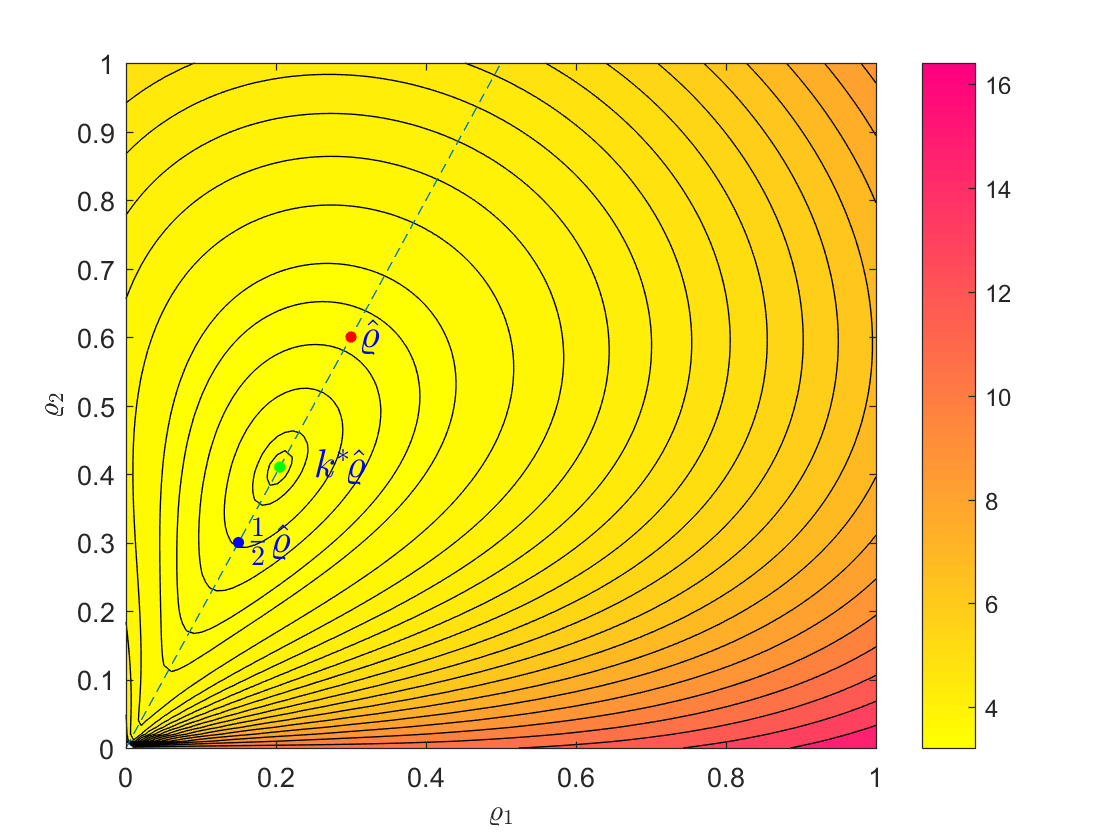}
\caption{2d variance contour in exploratory mean-variance problem}
\label{figure2-Chap4}
\end{figure}

\begin{remark}
\begin{itemize}
  \item[(a)] The involvement of exploration {\mou shifts} the minimum point of variance from $\hat{\varrho}$ to $k^*\hat{\varrho}$ where $\frac{1}{2}<k^*<1$. This indicates that based on the precise estimation of $\hat{\varrho}$, a minimal-variance targeted investor should be even further conservative; the market risk premium is rescaled by $k^*$. This phenomenon provides a reasonable explanation why robustness makes sense: when exploration is involved, the robust strategy with $\varrho$ has more chance to reduce the variance than with $\varrho^*$ due to the change of the minimum point from $\hat{\varrho}$ to $k^*\hat{\varrho}$. The behavior of the robust strategy naturally matches the target of minimizing the variance.
  \item[(b)] We know that the value function consists of the terminal variance and the entropy term. $\hat{\varrho}$ is optimal for the value function \eqref{main1} and $k^*\hat{\varrho}$ is the minimizer of the terminal variance. The risk premium adjustment from $\hat{\varrho}$ to $k^*\hat{\varrho}$ reduces the terminal variance but meanwhile deviates the optimality of the original problem \eqref{main1}. This is because $-\frac{cd}{4}\|\varrho\|^2T^2$ is a decreasing function of $\varrho$ in the entropy, and the entropy term increases as $\varrho$ decreases. We know that the terminal variance minimization is the effect of exploitation, and the entropy is the effect of exploration. An investor who searches for $k^*\!\hat{\varrho}$ instead of $\varrho^*$ essentially focus more on exploitation rather than exploration by rebalancing the weight between them.
\end{itemize}
\end{remark}

\section{Numerical experiments and results}\label{section44}

Having presented the theoretical formulation of the robust exploratory problem and the robust strategy against misspecification, now we focus on the real performance of the robust style investment under the exploratory background. In this section, first we simulate the wealth process \eqref{wealth} and compare the numerical behavior of the robust strategy against a misspecified one. Then we use the real SPX data to illustrate how robustness {\mou affects} exploration and parameter calibration.

\subsection{Performance comparison by wealth process simulation}

Given the uniform time mesh $\triangle t=T/n$ and {\mou the partition} $0=t_0<t_1<\dots<t_n=T$, the discrete wealth process of \eqref{wealth} is
\begin{equation}
  X_{i+1}^\varrho = X_i^\varrho + v_i' \sigma' \left[ \hat{\varrho} \triangle t + \triangle W_i\right],
\end{equation}
and $v_i$ is sampled from the distribution \eqref{policy} at $t=t_i$. We simulate $m$ trajectories and consider the investor to choose a misspecified $\varrho$ and the robust scenario $\varrho^*$ simultaneously. The actual market risk premium $\hat{\varrho}$ drives the wealth evolution but it is unknown to the investor.

\begin{algorithm}
\begin{algorithmic}
\State{Input initial endowment $x_0$, simulated paths $m$, time mesh $n$, convex set $\Xi$, misspecified $\varrho$, market parameters $\hat{\varrho}, \sigma$.}
\State{$\varrho^* \leftarrow$  Projection$(\varrho , \Xi)$}
\For{$i=0$ to $n-1$}
\For{$k=1$ to $m$}
\State{$\triangle W_i^k \leftarrow$  Simulate}
\State{$v_i^k(\varrho) \leftarrow$  PolicySampling$(\varrho)$}
\State{$v_i^k(\varrho^*)  \leftarrow$ PolicySampling$(\varrho^*)$}
\EndFor
\State{$X_{i+1}^\varrho \leftarrow$ Evolution$(X_i^\varrho, \varrho, \triangle W_i)$}
\State{$X_{i+1}^{\varrho^*} \leftarrow$ Evolution$(X_i^{\varrho^*}, \varrho^*, \triangle W_i)$}
\EndFor
\State{Var$(X_n^{\varrho}) \leftarrow$ Moments$(X_n^\varrho)$}
\State{Var$(X_n^{\varrho^*}) \leftarrow$ Moments$(X_n^{\varrho^*})$}
\\
\Return{Distribution and variance of $X_n^\varrho$ and $X_n^{\varrho^*}$}
\end{algorithmic}
\caption{Robust and misspecified policy simulations under exploration}
\label{algorithm41}
\end{algorithm}

Algorithm \ref{algorithm41} implements the simulation of the misspecified scenario and the robust strategy parallelly. Theoretically all the scenarios share the same terminal expectation $\mathbb{E}[X_T^\varrho]=\mathbb{E}[X_T^{\varrho^*}]=l$, so we directly compare the behavior of different scenarios by their variances. We choose the convex admissible set $\Xi$ to be two particular types: a cube and an elliptic. For the cube $\Xi:=\prod\limits_{j=1}^d [\underline{\varrho_j}, \overline{\varrho_j}]$ where the investor's estimation $\varrho\in\Xi$, the robust scenario is always regarded as $\varrho_j^*=\underline{\varrho_j}$. Hence, the projection of $\varrho$ in $\Xi$ is the vertex that all dimensions choose their left endpoint respectively. For the elliptic $\Xi:=\left\{\bar{\varrho} : \|\bar{\varrho}-\varrho\|\leq R, R<\|\varrho\|\right\}$ whose center is the misspecified scenario $\varrho$, the projection $\varrho^*=\varrho\,(1-\frac{R}{\|\varrho\|})$ keeps the direction invariant.

\begin{figure}[htb]
\centering
\subfloat[misspecified without robustness]{\includegraphics[width=0.5\linewidth]{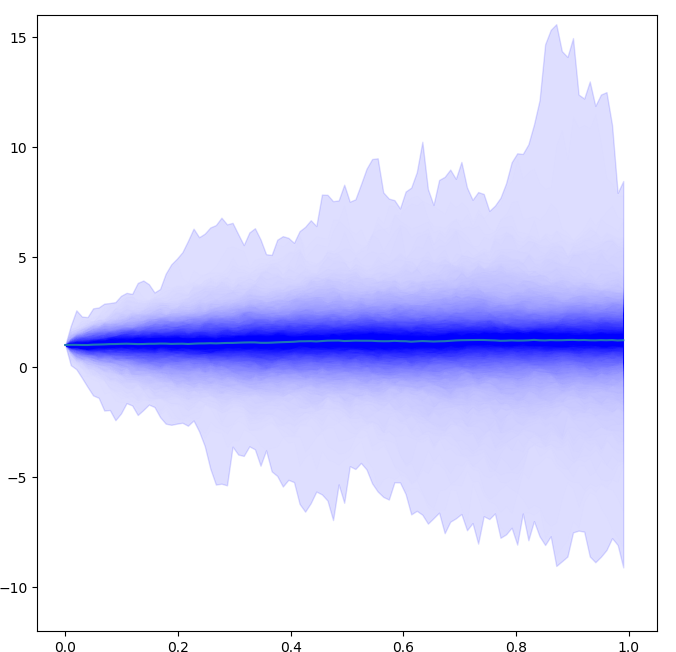}}
\subfloat[misspecified with robustness]{\includegraphics[width=0.5\linewidth]{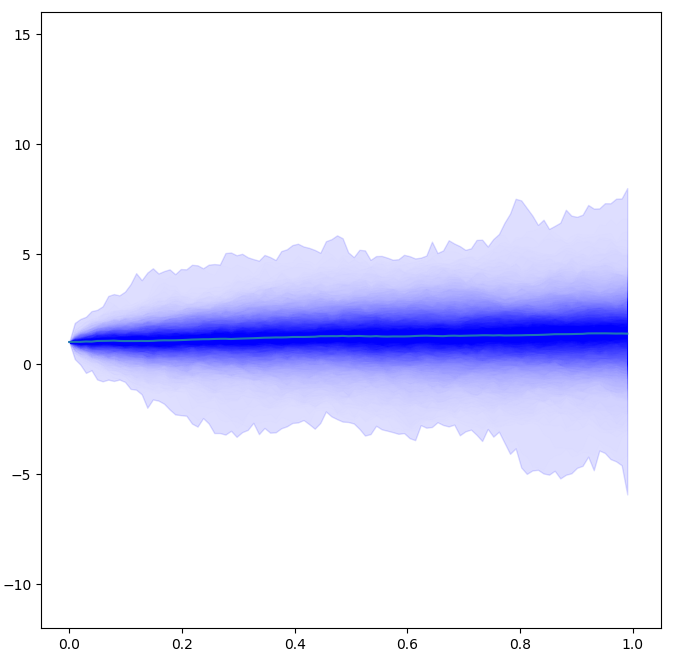}}
\caption{Simulation paths of exploratory wealth process $X_t^\varrho$ and $X_t^{\varrho^*}$}
\label{figure3-Chap4}
\end{figure}

Figure \ref{figure3-Chap4} depicts simulated paths of wealth {\mou processes} $X_t^\varrho$ and $X_t^{\varrho^*}$. In these two graphs, the darker the color, the more centralized the paths. It is obvious that the robust strategy (b) has a smaller envelope than the one without robustness (a). The parameters are set by: $x_0=1$, $l=1.2$, $T=1$, $d=4$, $c=1.5$, time mesh size $n=100$ and sample size $m=512$. We assume the diagonal volatilities $\Sigma_0=\text{diag}\,[0.15,0.2,0.4,0.3]$, and the correlation matrix
$$\rho = \left[\begin{matrix}  1& -0.85 & 0.45& 0.78 \\ -0.85& 1& -0.41& -0.62\\ 0.45& -0.41& 1& 0.64\\ 0.78& -0.62& 0.64& 1 \end{matrix}\right].$$
We can obtain the volatility matrix by taking the matrix square root of the variance matrix $\sigma'\sigma = \rho'\Sigma_0^2\rho$. It is further assumed that the correct risk premium $\hat{\varrho}=[0.4,0.4,0.4,0.4]$, but an investor chooses $\varrho=[0.5,0.5,0.5,0.5]$. The robust strategy replaces $\varrho$ by $\varrho^*=[0.25,0.25,0.25,0.25]$. Following Algorithm \ref{algorithm41}, the numerical values of two terminal variances are: $Var[X_T^\varrho]=3.503$, $Var[X_T^{\varrho^*}]=2.980$. In this implementation, we choose the identity risk premium among assets for convenience. Although after the robust adjustment $\varrho^*$ is further from $\hat{\varrho}$ than $\varrho$, it indeed reduces the variance. The expectation of simulated terminal wealth converges to target $l$ in both cases, which accords to the theoretical result.

\begin{table}[thbp]
{\footnotesize
\caption{Variance comparison of different scenarios}\label{table7}
\centering
\resizebox{\linewidth}{!}{
\begin{tabular}{c|c|c|c|c|c|c|c}
\hline \multirow{2}{*}{misspecified $\varrho$} & \multirow{2}{*}{$Var[X_T^\varrho]$} & \multicolumn{3}{c|}{cube convex set} & \multicolumn{3}{c}{elliptic convex set}\\
\cline{3-8}  & & $R$ & robust $\varrho^*$ & $Var[X_T^{\varrho^*}]$ & $R$ & robust $\varrho^*$ & $Var[X_T^{\varrho^*}]$ \\
\hline \multirow{4}{*}{[ 0.4, 0.5, 0.5 ,0.7 ]} & \multirow{4}{*}{ 3.664 } & 0.1 & [ 0.30, 0.40, 0.40, 0.60 ] & 3.313 & 0.2 & [ 0.330, 0.413, 0.413, 0.578 ] & 3.305 \\
 & & 0.2 & [ 0.20, 0.30, 0.30, 0.50 ] & 3.112 & 0.4 & [ 0.261, 0.326, 0.326, 0.456 ] & 3.090 \\
 & & 0.3 & [ 0.10, 0.20, 0.20, 0.40 ] & 2.909 & 0.6 & [ 0.191, 0.239, 0.239, 0.335 ] & 2.868 \\
 & & 0.35 & [ 0.05, 0.15, 0.15, 0.35 ] & 2.927 & 0.8 & [ 0.122, 0.152, 0.152, 0.213 ] & 2.949 \\
\hline \multirow{3}{*}{[ 0.15, 0.15, 0.35 ,0.4 ]} & \multirow{3}{*}{ 3.018 } & 0.05 & [ 0.10, 0.10, 0.30, 0.35 ] & 2.937 & 0.1 & [ 0.104, 0.104, 0.243, 0.278 ] & 2.843 \\
 & & 0.1 & [ 0.05, 0.05, 0.25, 0.3 ] & 2.958 & 0.2 & [ 0.058, 0.058, 0.136, 0.156 ] & 2.936 \\
 & & 0.15 & [ 0, 0, 0.20, 0.25 ] & 2.996 & 0.3 & [ 0.013, 0.013, 0.029, 0.034 ] & 3.019 \\
\hline \multirow{4}{*}{[ 0.5, 0.4, 0.3 ,0.2 ]} & \multirow{4}{*}{ 3.231} & 0.05 & [ 0.45, 0.35, 0.25, 0.15 ] & 3.218 & 0.2 & [ 0.315, 0.252, 0.189, 0.126 ] & 3.027 \\
 & & 0.1 & [ 0.40, 0.30, 0.20, 0.10 ] & 3.071 & 0.3 & [ 0.222, 0.178, 0.133, 0.089 ] & 2.986 \\
 & & 0.15 & [ 0.35, 0.25, 0.15, 0.05 ] & 3.112 & 0.4 & [ 0.130, 0.104, 0.078, 0.052 ] & 2.965 \\
 & & 0.2 & [ 0.30, 0.20, 0.10, 0 ] & 3.143 & 0.5 & [ 0.037, 0.030, 0.022, 0.015 ] & 3.096 \\
\hline
\end{tabular}}
}
\end{table}

More simulation results are presented in Table \ref{table7}. This time we keep the parameters the same as those in last experiment except $\hat{\varrho}=[0.2, 0.3, 0.4, 0.5]$ and larger sample size $n=8192$. The investor will apply different $\varrho$ and use different convex set $\Xi$ to describe the robust scenarios. In the cube type, $R$ is the distance from $\varrho$ to endpoints so $\underline{\varrho_j}=\varrho_j-R$. According to Table \ref{table7}, in most cases robustness reduces the variance. Furthermore, there is more mistake tolerance to adopt an underestimated risk premium than an overestimated one. Finally, the elliptic convex type is more sensitive than the cube type, and possibly approaches the basin of the variance surface even though the direction of $\varrho$ is far from $\hat{\varrho}$.

\subsection{Performance test by real market data}

Lastly, we consider the performance of the robust exploratory mean-variance portfolio optimization in a real market test. In order to discuss the behavior of robust/exploratory strategies in different market patterns, we choose two benchmarks separately: SPX daily data in US market (bull) and SSE Composite index daily data in Chinese market (bear) among last 15 years, see Figure \ref{dataplot}.

\begin{figure}[thbp]
\centering
\subfloat[discounted SPX daily price in US market]{\includegraphics[width=\linewidth]{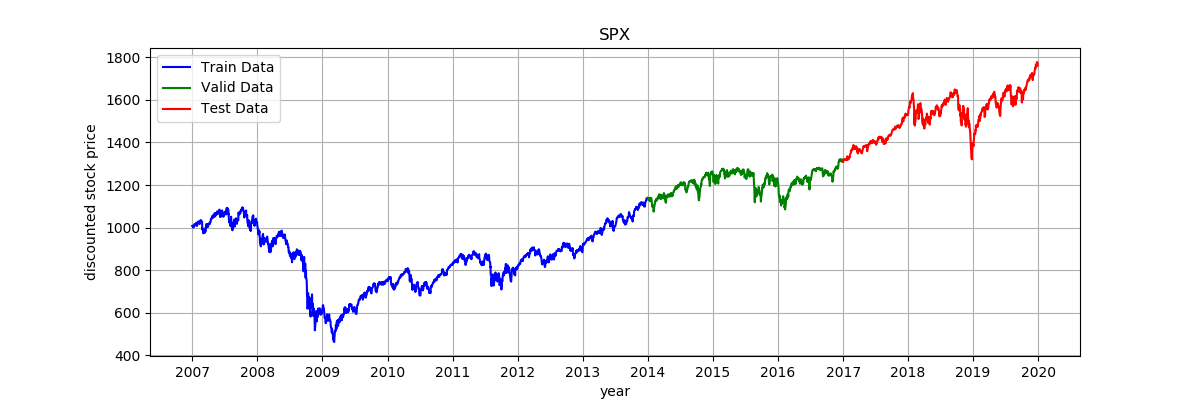}}\\
\subfloat[discounted SSE composite index in Chinese market]{\includegraphics[width=\linewidth]{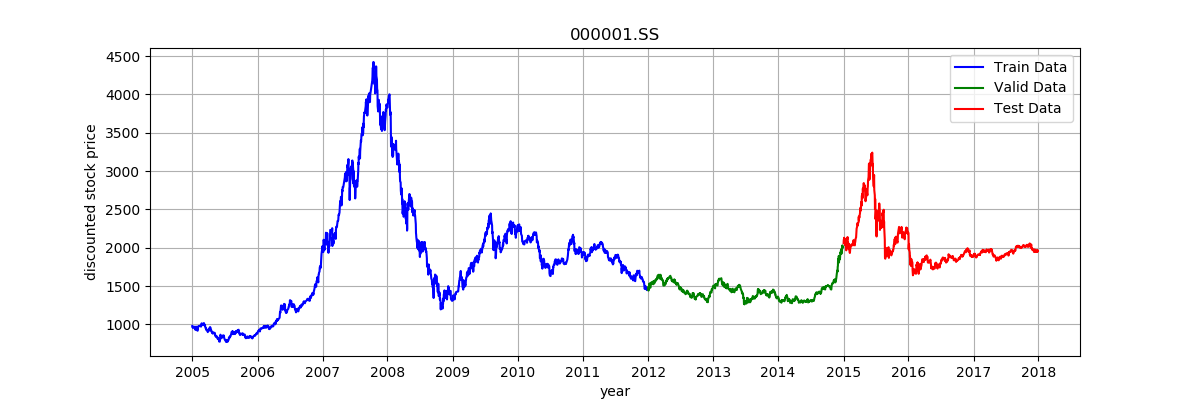}}
\caption{Real market data in different patterns}
\label{dataplot}
\end{figure}

The risk free rate is chosen {\mou to be} $r=0.02$ for price discounting. For convenience, we assume all the parameters to be calibrated are {\mou constants} and only 1 risky asset to be invested.

In each case, we split the stock price series into three parts: first 7 years to be train data, next 3 years to be valid data and last 3 years to be test data. We clip each data series with 1-year length successively and collect them to generate the data pool. The investment period is fixed to be $T=1$. SPX data has $n=252$ trading days per year in average while the number in SSE composite index is $n=243$. Unknown market parameters $\varrho$, $\sigma$ and $\omega$ are calibrated through training data and validation data. Finally we input the calibrated parameters into the test data and observe the investment performance.

Our calibration method as well as the portfolio optimization are based on the minimization problem \eqref{main0} and the optimal strategy distribution \eqref{policy}. The loss function of the minimization problem contains two terms: the terminal variance $\mathbb{E}[(X_T-\omega)^2]-(\omega-l)^2$ and the exploration loss equals to $-\frac{c}{2}\sum\limits_{i=0}^{n-1}\ln(\frac{\pi ec}{Var(v_i)}).$ {\mou The terminal variance is a function of $(\varrho, \omega)$ and the exploration loss is a function of $(\varrho, \sigma)$.}

Since the exploration loss is an increasing function of $\sigma$, it is ineffective to recover the volatility level of the real data by minimizing the exploratory value function \eqref{main0}. Instead, we estimate $\sigma$ by computing the historical volatility
\begin{equation*}
  \hat{\sigma}=\sqrt{n}\;\text{Std}(\ln(\dfrac{P_{i+1}}{P_i})),
\end{equation*}
where $P_i$ is the market price at day $i$ in $n$-length rolling window. Then we train the parameter $\varrho$ by the stochastic gradient descend scheme and update the optimal Lagrangian multiplier by $\omega\leftarrow\omega - lr(\mathbb{E}[X_T^{\pi^*}]-l)$, where $lr$ is the exponentially decaying learning rate defined by $lr=0.01e^{-0.0002k}$.

We further set the hyper-parameters by: batch size $m=512$, training steps $K=10000$, initial wealth $x_0=1$, target terminal wealth $l=1.2$, exploration intensity $c=0.001$. $c$ should be chosen properly so that it is able to keep balance between the terminal variance loss and the exploration loss.

\begin{algorithm}
\begin{algorithmic}
\State{Input $x_0$, $m$, $n$, $\Xi$, Train Data, Valid Data, Test Data.}
\State{Data pool $\leftarrow$ Data series}
\State{$\hat{\sigma}_{train}, \hat{\sigma}_{valid} \leftarrow$ HistVol(Train Data, Valid Data)}
\State{ Initialize $\rho,\omega$}
\For{$k=1$ to $K$}
\State{$\varrho_k^* \leftarrow$ Projection$(\varrho_k, \Xi)$}
\State{$P^k, \bar{P}^k \leftarrow$ EpochSampling(Train data pool, Test data pool)}
\For{$i=0$ to $n-1$}
\State{$v_i(\varrho_k) \leftarrow$ PolicySampling$(\varrho_k, \omega_k, \hat{\sigma}_{train})$}
\State{$X_{i+1}^{\varrho_k} \leftarrow X_i^{\varrho_k} + v_i(\varrho_k)(\frac{P_{i+1}^k}{P_i^k}-1)$}
\State{$v_i(\varrho_k^*) \leftarrow$ PolicySampling$(\varrho_k^*, \omega_k, \hat{\sigma}_{valid})$}
\State{$X_{i+1}^{\varrho_k^*} \leftarrow X_i^{\varrho_k^*} + v_i(\varrho_k^*)(\frac{\bar{P}_{i+1}^k}{\bar{P}_i^k}-1)$}
\EndFor
\State{Loss$(\varrho, \omega) \leftarrow$ Mean$((X_n^\varrho-\omega)^2)-(\omega-l)^2 +$ ExploreLoss$(\hat{\sigma}_{train}, \varrho)$}
\State{$\varrho_{k+1} \leftarrow$ AdamOptimizer(Loss, $\varrho$, lr)}
\State{$\omega_{k+1} \leftarrow \omega_k -$ lr(Mean$(X_n^{\varrho_k})-l$)}
\EndFor
\State{Mean$(X_n^{\varrho})$, Var$(X_n^{\varrho}) \leftarrow$ Moments$(X_n^\varrho(\omega, \hat{\sigma}_{valid}))$}
\State{Mean$(X_n^{\varrho^*})$, Var$(X_n^{\varrho^*}) \leftarrow$ Moments$(X_n^{\varrho^*}(\omega, \hat{\sigma}_{valid}))$}\\
\Return{Mean and variance of $X_n^\varrho$ and $X_n^{\varrho^*}$}
\end{algorithmic}
\caption{Robust and exploratory investment on real market data}
\label{algorithm42}
\end{algorithm}

Algorithm \ref{algorithm42} provides the performance test based on two real market data. Under the background of model uncertainty, it is suspected that the value $\varrho$ estimated from market data is still misspecified. A robust investor further cuts down the estimated value of $\varrho$ to $\varrho^*=R\varrho$ for his/her own market viewpoint where $R=0.8,0.6,0.4$ respectively. The key point is to compare the numerical results of different strategies (robust and no robust) in different market patterns (bull and bear). The investment performances are shown in Table \ref{table8}, Figure \ref{RESPX} and Figure \ref{RESSE}.

\begin{table}[thbp]
{\footnotesize
\caption{Market performance and calibration results of robust exploratory problem}
\label{table8}
\centering
\resizebox{\linewidth}{!}{
\begin{tabular}{c||c|c|c|c||c|c|c|c}
\hline benchmark & \multicolumn{4}{c||}{SPX} & \multicolumn{4}{c}{SSE}\\
\hline calibration & $\varrho$ & $\hat{\sigma}_{train}$ & $\hat{\sigma}_{valid}$ & $\omega$ & $\varrho$ & $\hat{\sigma}_{train}$ & $\hat{\sigma}_{valid}$ & $\omega$\\
\hline results & 1.104 & 2.025e-1 & 1.236e-1 & 1.418 & 1.220e-1 & 2.982e-1 & 1.721e-1 & 3.066 \\
\hline strategies & \multicolumn{3}{c}{Robust exploratory} & exploratory & \multicolumn{3}{c}{Robust exploratory} & exploratory \\
\hline R & 0.4 & 0.6 & 0.8 & 1.0 & 0.4 & 0.6 & 0.8 & 1.0 \\
$\varrho^*$   & 4.416e-1 & 6.623e-1 & 8.832e-1 & 1.104 & 4.881e-2 & 7.321e-2 & 9.762e-2 & 1.220e-1 \\
test loss     & 2.237e-1 & 2.053e-1 & 2.028e-1 & 2.156e-1 &-3.529 &-3.525 &-3.521 &-3.514 \\
test mean     & 1.126 & 1.193 & 1.252 & 1.301 &9.866e-1 &9.769e-1 &9.725e-1 &9.656e-1 \\
test variance & 4.285e-3 & 5.230e-3 & 4.499e-3 & 3.419e-3 &3.002e-3 &6.644e-3 &1.040e-2 &1.715e-2 \\
\hline
\end{tabular}}}
\end{table}

In the Chinese market, both valid region and test region basically go through the bear market, so the optimality of investment strategy under train data does not completely transfer to the test region. Despite following the optimal exploratory strategies, the market misspecification leads to negative profit in mean. In this situation, robustness helps reduce both the risk and the profit loss simultaneously. As a result, in both mean and variance, the robust strategy outperforms the purely exploratory strategy. On the contrary, the US market has gone through a remarkable long run bull. The robust strategy seems to be too conservative to adapt to the US market because in both mean and variance it underperforms the purely exploratory strategy.

In conclusion, whether robustness makes sense is most likely related to the financial market background. Compared with exploratory strategy, robustness takes advantage in bear market and disadvantage in bull market. A conservative investor adopts the robust strategy rather than a pure exploratory one in order to resist the downside risk, even though he will potentially miss the high profit in uptrend market.

\begin{figure}[thbp]
\centering
\subfloat[mean of wealth process trajectories]{\includegraphics[width=0.5\linewidth]{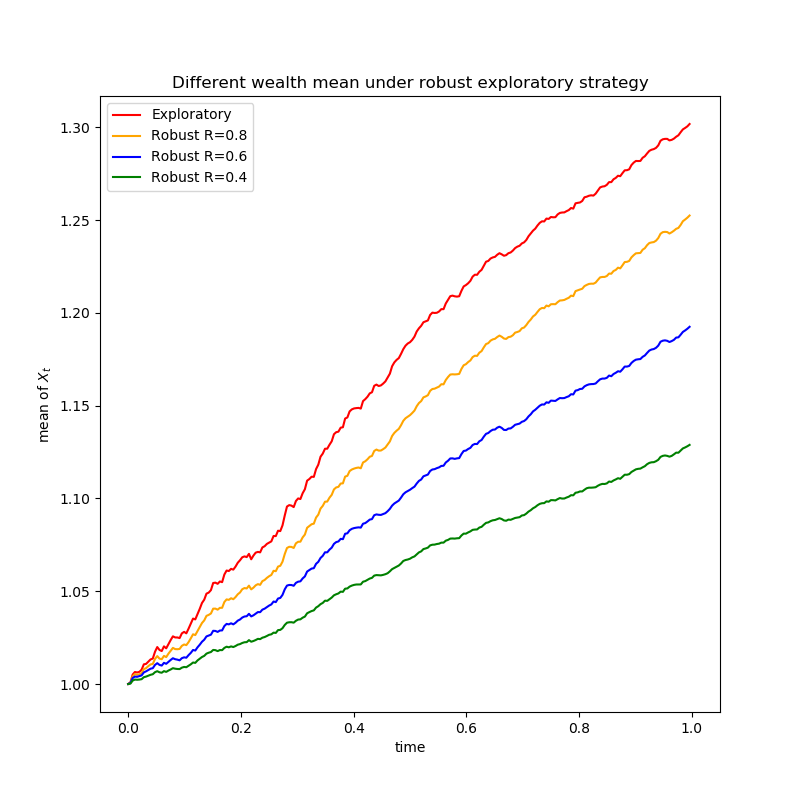}}
\subfloat[variance of wealth process trajectories]{\includegraphics[width=0.5\linewidth]{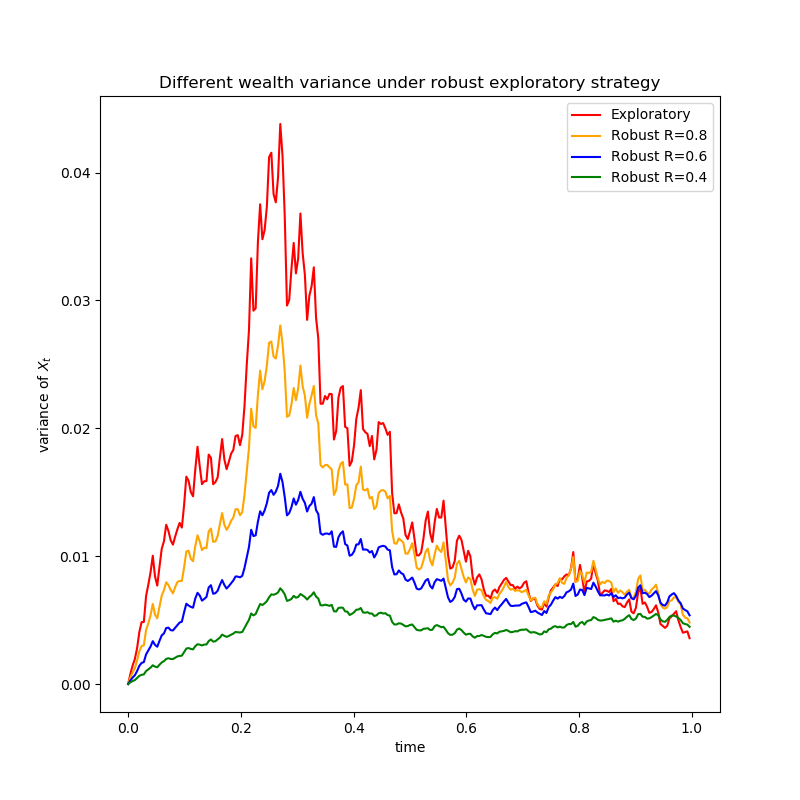}}
\caption{Robust-exploratory investment performances on SPX}
\label{RESPX}
\end{figure}
\begin{figure}[thbp]
\centering
\subfloat[mean of wealth process trajectories]{\includegraphics[width=0.5\linewidth]{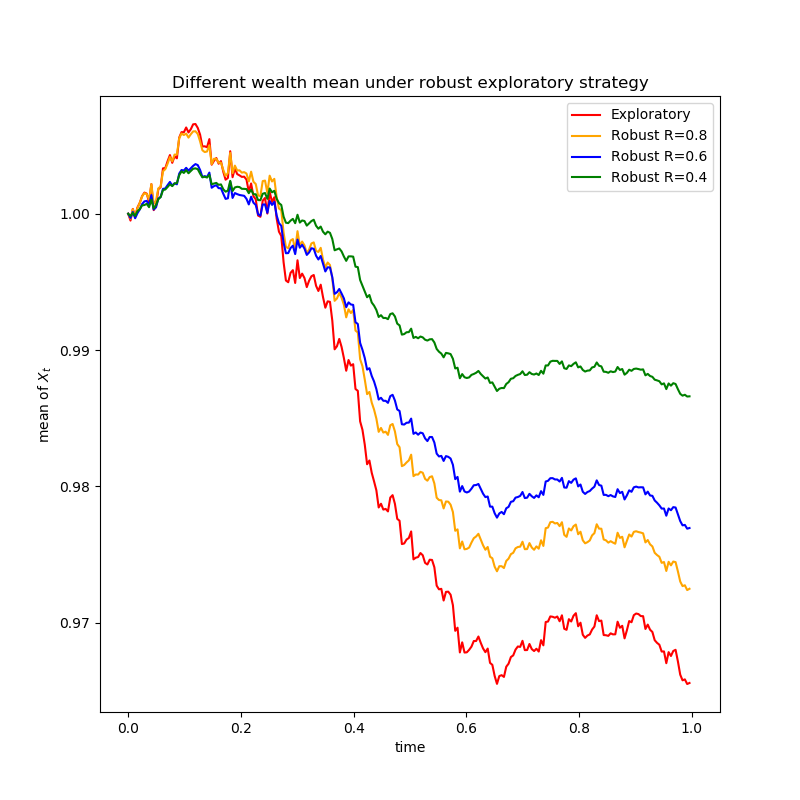}}
\subfloat[variance of wealth process trajectories]{\includegraphics[width=0.5\linewidth]{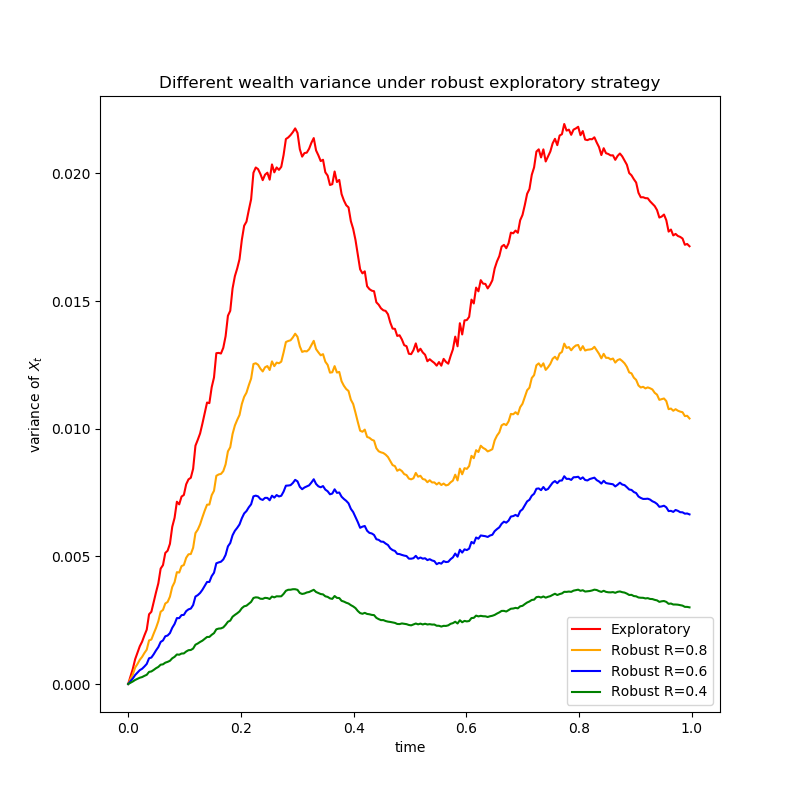}}
\caption{Robust-exploratory investment performances on SSE}
\label{RESSE}
\end{figure}

\section{Conclusion}\label{section45}

The robust exploratory mean-variance analysis was investigated in this paper. Based on the previous work of exploratory mean-variance problem, we inherited the exploratory wealth dynamic setting but further assumed there is model uncertainty in drift. Under the background of misspecification, a robust investor who always considers the worst scenario should seek for the sharpe ratio with minimal $L^2$ norm in the specific admissible set as his/her market perspective, and this is also a saddle-point of the min-max problem. Theoretically, an investment under a misspecified $\varrho$ deviates the target of minimizing the terminal variance, while the robust viewpoint $\varrho^*$ has more opportunity to reduce the deviation than to exaggerate it. Market parameter calibration in exploratory mean-variance optimization is based on the balance between exploration and exploitation. It can be justified that a robust investor's behavior is equivalent to transferring additional weight from exploration to exploitation. Finally, financial data backtests show that robustness outperforms the pure exploration and helps resist the downside risk in a bear market, while it underperforms in a bull market.

\appendix
\section{The proof of convexity in Proposition \ref{Theorem43}} \label{AppendixB}
\begin{lemma} \label{lemmaA1}
{\mou Let $f : \mathbb{R}\rightarrow \mathbb{R}$ and $g : \mathbb{R}^d \rightarrow \mathbb{R}$ be smooth and convex functions. Assume $f'>0$ and $f''>0$ on $\mathbb R^d$}. {\mou Assume also $g$ satisfies either $\nabla^2g>0$ or $\nabla g$ is invertible}. Then {\mou$f(g(x)):\mathbb R^d\to\mathbb R$} is strictly convex.
\end{lemma}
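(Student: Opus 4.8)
The plan is to show that the Hessian $\nabla^2\big(f(g(x))\big)$ is positive definite, using the standard chain rule for second derivatives. Writing $h(x):=f(g(x))$, one computes $\nabla h(x)=f'(g(x))\,\nabla g(x)$ and
\begin{equation*}
\nabla^2 h(x)=f''(g(x))\,\nabla g(x)\,\nabla g(x)'+f'(g(x))\,\nabla^2 g(x).
\end{equation*}
Here $\nabla g(x)\,\nabla g(x)'$ denotes the rank-one outer product, which is always positive semidefinite, while $\nabla^2 g(x)$ is positive semidefinite by convexity of $g$. Since $f'>0$ and $f''>0$ everywhere, $\nabla^2 h(x)$ is a nonnegative combination of two positive semidefinite matrices, hence positive semidefinite; it remains only to rule out the existence of a nonzero vector in its kernel.

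To get strict positive definiteness, I would argue by contradiction: suppose $\xi\neq 0$ satisfies $\xi'\nabla^2 h(x)\,\xi=0$. Because both terms $f''(g(x))\,(\xi'\nabla g(x))^2$ and $f'(g(x))\,\xi'\nabla^2 g(x)\,\xi$ are nonnegative and their sum vanishes, each must vanish separately. From the first term and $f''>0$ we get $\nabla g(x)'\xi=0$; from the second and $f'>0$ we get $\xi'\nabla^2 g(x)\,\xi=0$. Now I invoke the hypothesis: if $\nabla^2 g>0$, then $\xi'\nabla^2 g(x)\,\xi=0$ already forces $\xi=0$, a contradiction; alternatively, if $\nabla g$ is invertible (as a map, i.e.\ $\nabla g(x)$ is an invertible matrix for each $x$), then $\nabla g(x)'\xi=0$ forces $\xi=0$, again a contradiction. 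Either way no such $\xi$ exists, so $\nabla^2 h(x)\succ 0$ at every $x$, and $h=f\circ g$ is strictly convex.

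The computation is routine; the only subtlety — and the step I would be most careful about — is the disjunctive hypothesis on $g$. I would make sure the statement is interpreted so that exactly one of the two escape routes is available at each point, and that ``$\nabla g$ invertible'' is read as the Jacobian matrix $\nabla g(x)\in\mathbb R^{d\times d}$ being nonsingular (which only makes sense when $g:\mathbb R^d\to\mathbb R^d$; in the application of Lemma~\ref{lemmaA1} the inner function is vector-valued in the relevant direction, or one restricts to the line $\varrho=k\hat\varrho$ where everything reduces to $d=1$ and $\nabla g$ being a nonzero scalar). I would also note for completeness that strict convexity of $h$ as a function on $\mathbb R^d$ follows from $\nabla^2 h\succ 0$ on a convex domain by the usual integral (Taylor-with-remainder) argument, so that no extra regularity beyond the assumed smoothness is needed.
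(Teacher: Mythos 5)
Your proof is correct and takes essentially the same route as the paper: compute the Hessian of $f\circ g$ by the chain rule, obtaining $f''(g(x))\,\nabla g(x)\nabla g(x)' + f'(g(x))\,\nabla^2 g(x)$, and show the associated quadratic form is positive definite. The only difference is that you spell out, via the contradiction argument and the disjunctive hypothesis on $g$ (including the observation that ``$\nabla g$ invertible'' must be read as nonvanishing in the relevant direction), why the form is strictly positive on nonzero vectors --- a step the paper's proof asserts without detail.
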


\begin{proof}
Compute the Hessian matrix of $f(g(x))$:
\begin{equation*}
  \nabla^2 f(g(x)) = \nabla ( f'(g(x))\nabla g(x) ) = f''(g(x))\nabla g(x)\nabla g(x)' + f'(g(x))\nabla^2 g(x).
\end{equation*}
{\mou With the assumptions on $f$ and $g$, we have, for arbitrary $y\in\mathbb{R}^d$, $$y'\nabla^2 f(g(x))y=f''(g(x)) \|y'\nabla g(x)\|^2 + f'(g(x)) y'\nabla^2g(x) y>0.$$ Then $\nabla^2 f(g(x))$ is also positive definite, which implies $f(g(x))$ is strictly convex.}
\end{proof}

\begin{theorem}
  The function $\frac{(e^{2(\varrho'\varrho-\varrho'\hat{\varrho})T}-1)}{2(\varrho'\varrho-\varrho'\hat{\varrho})T}$ is a strictly convex function of $\varrho$.
\end{theorem}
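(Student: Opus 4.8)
The plan is to exhibit the function as a composition $f\circ g$ to which Lemma \ref{lemmaA1} applies. Write $g(\varrho):=2T(\varrho'\varrho-\varrho'\hat\varrho)$, a quadratic on $\mathbb R^d$, and $h(w):=\frac{e^w-1}{w}$ for $w\neq 0$ with $h(0):=1$. Then the function in the statement is precisely $h(g(\varrho))$, so it suffices to verify that $h$ plays the role of $f$ and $g$ the role of $g$ in the hypotheses of Lemma \ref{lemmaA1}.

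First I would record the elementary identity $h(w)=\int_0^1 e^{wt}\,dt$, valid for every $w\in\mathbb R$ (including $w=0$). Differentiating under the integral sign, which is legitimate since $(w,t)\mapsto e^{wt}$ and all its $w$-derivatives are continuous and locally bounded on $\mathbb R\times[0,1]$, gives $h\in C^\infty(\mathbb R)$ together with $h'(w)=\int_0^1 t\,e^{wt}\,dt>0$ and $h''(w)=\int_0^1 t^2 e^{wt}\,dt>0$ for all $w$. Hence $h$ is smooth, strictly increasing and strictly convex on all of $\mathbb R$, exactly the requirements imposed on $f$ in Lemma \ref{lemmaA1}.

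Next I would deal with $g$. Completing the square, $g(\varrho)=2T\|\varrho-\tfrac12\hat\varrho\|^2-\tfrac T2\|\hat\varrho\|^2$, so $g$ is smooth with $\nabla g(\varrho)=2T(2\varrho-\hat\varrho)$ and $\nabla^2 g(\varrho)=4T\,I_d$, a constant positive-definite matrix. In particular $g$ is convex and satisfies $\nabla^2 g>0$, which is one of the two alternative conditions on $g$ in Lemma \ref{lemmaA1}. Applying that lemma with $f=h$ and this $g$ then yields that $h\circ g$ is strictly convex, which is the assertion.

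There is essentially no hard step: the only point needing care is the global positivity of $h'$ and $h''$, in particular for negative arguments, where the power series $h(w)=\sum_{n\ge 0}w^n/(n+1)!$ is not sign-definite term by term; the integral representation settles this cleanly and at the same time certifies the smoothness of $h$ across the removable singularity at $w=0$. One may also note in passing that $g$ only takes values in $[-\tfrac T2\|\hat\varrho\|^2,\infty)$, but since $h$ is well-behaved on all of $\mathbb R$ this restriction is immaterial.
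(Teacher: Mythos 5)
Your proof is correct and follows the same overall route as the paper: the same decomposition $h\circ g$ with $g(\varrho)=2T(\varrho'\varrho-\varrho'\hat\varrho)$ and $h(w)=(e^w-1)/w$, fed into Lemma \ref{lemmaA1}. Two points of difference are worth noting. First, you verify $h'>0$ and $h''>0$ on all of $\mathbb R$ via the integral representation $h(w)=\int_0^1 e^{wt}\,dt$, which settles smoothness at $w=0$ and global positivity of both derivatives in one stroke; the paper instead computes $f'$ and $f''$ explicitly and analyzes the auxiliary functions $(x-1)e^x+1$ and $(x^2-2x+2)e^x-2$ by hand. Your route is cleaner and less error-prone, though it proves the same facts. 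Second, and more substantively, you compute $\nabla g(\varrho)=2T(2\varrho-\hat\varrho)$ and $\nabla^2 g=4T\,I_d>0$ correctly and invoke the $\nabla^2 g>0$ branch of Lemma \ref{lemmaA1}; the paper's proof asserts ``$\nabla g(x)=4TI_d$ is invertible and $\nabla^2 g(x)\equiv 0$'', which swaps the gradient and the Hessian and, taken literally, would leave only the rank-one term $f''(g)\nabla g\,\nabla g'$ in the Hessian of the composition --- not positive definite when $d\ge 2$. Your version uses the branch of the lemma that actually applies, so it quietly repairs that slip.
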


\begin{proof}
Let $g(x): = 2T(x'x-x'x_0)$ where $x,x_0\in\mathbb{R}^{d,+}$ and $x_0$ is fixed. Let $f(x):=\frac{e^x-1}{x}$ when $x\neq 0$ and $f(x) = 1$ when $x=0$. {\mou Then
$\nabla g(x)=4TI_d$ is invertible and $\nabla^2g(x)\equiv 0$.} We also have the derivative of $f(x)$
\begin{equation*}
  f'(x) = \left\{\begin{array}{ll} \dfrac{(x-1)e^x+1}{x^2} & \text{if} \quad x\neq 0 \\ \dfrac{1}{2} & \text{if} \quad x=0.
  \end{array} \right.
\end{equation*}
{\mou Let $h(x):=(x-1)e^x+1$, $h'(x)= xe^x$ has the critical point $x=0$, and $h(0)=0$ attains the global minimum. So $f'(x)>0$ for all $x\neq 0$, together with $f'(0)=\frac{1}{2}>0$ we have $f$ is strictly increasing.} The second order derivative of $f$ is given by
\begin{equation*}
 f''(x)=\left\{\begin{array}{ll} \dfrac{(x^2-2x+2)e^x-2}{x^3} & \text{if} \quad x\neq 0 \\ \dfrac{1}{3} & \text{if} \quad x=0.
  \end{array} \right.
\end{equation*}
Again let $l(x) := (x^2-2x+2)e^x-2$. Then $l'(x)=x^2e^x\geq 0$, which imply $l(x)$ is strictly. Together with $l(0)=0$, we have $l(x)<0$ when $x<0$, and $l(x)>0$ when $x>0$. Then $f''(x)=l(x)/x^3>0$ when $x\neq 0$, and $f''(0)=\frac{1}{3}>0$ as well, so $f$ is strictly convex. Applying Lemma \ref{lemmaA1}, $f(g(\varrho))=\frac{(e^{2(\varrho'\varrho-\varrho'\hat{\varrho})T}-1)}{2(\varrho'\varrho-\varrho'\hat{\varrho})T}$ is strictly convex.
\end{proof}

\begin{lemma}  \label{lemmaA2}
{\mou Let $f : \mathbb{R}^+\rightarrow \mathbb{R}$ and $g : \mathbb{R}^+\rightarrow \mathbb{R}^+$ be smooth functions. Assume $f''<0$ and $f'> 0$ on $\mathbb R^+$. Assume also $(f\circ g)''>0$ on $\mathbb R^+$. Then $g$ is strictly convex.}
\end{lemma}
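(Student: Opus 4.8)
The statement to prove is the converse-type result in Lemma \ref{lemmaA2}: from $f'>0$, $f''<0$ on $\mathbb{R}^+$, together with $(f\circ g)''>0$, deduce that $g$ is strictly convex. The plan is to write out the chain rule for the second derivative and then isolate $g''$. Differentiating twice, $(f\circ g)'(x)=f'(g(x))g'(x)$, and hence
\begin{equation*}
  (f\circ g)''(x) = f''(g(x))\,(g'(x))^2 + f'(g(x))\,g''(x).
\end{equation*}
Solving for $g''(x)$ gives
\begin{equation*}
  g''(x) = \frac{(f\circ g)''(x) - f''(g(x))\,(g'(x))^2}{f'(g(x))}.
\end{equation*}

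Now I would argue term by term that the right-hand side is strictly positive. The denominator $f'(g(x))$ is positive because $g$ maps $\mathbb{R}^+$ into $\mathbb{R}^+$ and $f'>0$ on $\mathbb{R}^+$. In the numerator, the first term $(f\circ g)''(x)$ is strictly positive by hypothesis. For the second term, $f''(g(x))<0$ again because $g(x)\in\mathbb{R}^+$ and $f''<0$ there, while $(g'(x))^2\ge 0$; therefore $-f''(g(x))\,(g'(x))^2\ge 0$. Adding a strictly positive quantity to a nonnegative one yields a strictly positive numerator, so $g''(x)>0$ for every $x\in\mathbb{R}^+$, which is exactly strict convexity of $g$.

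The only subtle point — and the place where I expect the argument needs a word of care rather than real difficulty — is the appeal to $g(x)\in\mathbb{R}^+$ to guarantee that $f'$ and $f''$ are being evaluated where their sign hypotheses hold; this is built into the statement since $g:\mathbb{R}^+\to\mathbb{R}^+$, so the composition and all sign claims are legitimate. One might also note in passing that smoothness of $f$ and $g$ is what licenses the two applications of the chain rule and the pointwise manipulation. There is no genuine obstacle here: the lemma is essentially a one-line rearrangement of the chain rule, and the content is purely the bookkeeping of signs. I would present it as a short direct computation exactly as above.
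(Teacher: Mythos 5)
Your argument is correct and coincides with the paper's own proof: both expand $(f\circ g)'' = f''(g)(g')^2 + f'(g)g''$ and read off $g''>0$ from the signs $f'(g)>0$, $f''(g)<0$, and $(f\circ g)''>0$. The extra care you take about $g$ mapping into $\mathbb{R}^+$ so that the sign hypotheses on $f'$ and $f''$ apply is valid and matches the implicit use in the paper.
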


\begin{proof}
The strict convexity of function $f(g(x))$ implies
\begin{equation*}
  f(g(x))'' = f''(g(x))(g'(x))^2 + f'(g(x))g''(x)>0
\end{equation*}
whereas $f''(g(x))<0$ and $f'(g(x))>0$. Then $g''(x)>0$ and thus $g$ is strictly convex.
\end{proof}

\begin{theorem} \label{theoremA3}
  The function $\frac{(e^{k^2\|\hat{\varrho}\|^2 T}-1)}{(e^{k\|\hat{\varrho}\|^2T}-1)^2}$ is a strictly convex function of $k$ {\mou on $\mathbb R^+$}.
\end{theorem}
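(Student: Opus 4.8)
The plan is to apply Lemma \ref{lemmaA2} with a well-chosen auxiliary function $f$ for which the composition $f\circ g$ is easy to analyze. Writing $a:=\|\hat{\varrho}\|^2T>0$ and $g(k):=\frac{e^{k^2 a}-1}{(e^{ka}-1)^2}$, the natural choice is the logarithm: set $f(y):=\ln y$ on $\mathbb{R}^+$, so that $f''<0$ and $f'>0$ as required by the lemma. Then
\begin{equation*}
  (f\circ g)(k)=\ln\!\bigl(e^{k^2a}-1\bigr)-2\ln\!\bigl(e^{ka}-1\bigr),
\end{equation*}
which is a difference of two one-variable functions of the simple form $k\mapsto\ln(e^{\alpha k^{p}}-1)$. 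The task then reduces to showing $(f\circ g)''>0$ on $\mathbb{R}^+$, after which Lemma \ref{lemmaA2} immediately gives that $g$ itself is strictly convex.

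First I would compute $(f\circ g)'$ and $(f\circ g)''$ explicitly. Using $\frac{d}{dk}\ln(e^{k^2a}-1)=\frac{2ka\,e^{k^2a}}{e^{k^2a}-1}$ and $\frac{d}{dk}\ln(e^{ka}-1)=\frac{a\,e^{ka}}{e^{ka}-1}$, one gets
\begin{equation*}
  (f\circ g)'(k)=\frac{2ka\,e^{k^2a}}{e^{k^2a}-1}-\frac{2a\,e^{ka}}{e^{ka}-1}.
\end{equation*}
It is convenient to introduce the helper $\phi(u):=\frac{u e^{u}}{e^{u}-1}$ for $u>0$, so that $(f\circ g)'(k)=\frac{2a}{k}\,\phi(k^2a)-2\,\phi(ka)\cdot\frac{1}{1}$; more precisely, noting $\frac{2ka\,e^{k^2a}}{e^{k^2a}-1}=\frac{2}{k}\,\phi(k^2a)$ and $\frac{2a\,e^{ka}}{e^{ka}-1}=\frac{2}{k}\,\phi(ka)$, we obtain the clean form $(f\circ g)'(k)=\frac{2}{k}\bigl(\phi(k^2a)-\phi(ka)\bigr)$. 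Differentiating once more and multiplying through by $k^2>0$, the sign of $(f\circ g)''$ is that of $k\,\phi'(k^2a)\cdot 2ka - \phi'(ka)\cdot ka\cdot\frac{?}{}$ — i.e. after careful bookkeeping, everything reduces to monotonicity and convexity properties of the single function $\phi(u)=\frac{ue^u}{e^u-1}=\frac{u}{1-e^{-u}}$ on $\mathbb{R}^+$. I would establish that $\phi$ is strictly increasing and strictly convex on $\mathbb{R}^+$ (both follow from elementary series expansions, since $\phi(u)=1+\tfrac{u}{2}+\tfrac{u^2}{12}-\cdots$ has a power series with the relevant coefficients positive, or alternatively from $\phi'(u)=\frac{e^u(e^u-1-u)}{(e^u-1)^2}>0$ and a similar but messier computation for $\phi''$). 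Since $k^2a>ka$ exactly when $k>1$ and conversely when $k<1$, the term $\phi(k^2a)-\phi(ka)$ changes sign at $k=1$, so the naive ``both pieces convex'' argument does not suffice; instead the convexity of $\phi$ must be leveraged through the chain rule in a way that controls the cross terms.

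The main obstacle is precisely this sign change at $k=1$: unlike the exploration term in Proposition \ref{Theorem43}, here $g$ is \emph{not} a sum of manifestly convex pieces, so one genuinely needs the composition trick of Lemma \ref{lemmaA2} plus a quantitative convexity estimate on $\phi$. I expect the cleanest route is to show directly that $u\mapsto\ln(e^u-1)$ is strictly concave on $\mathbb{R}^+$ (its second derivative is $-\frac{e^u}{(e^u-1)^2}<0$), so that $k\mapsto\ln(e^{ka}-1)$ is concave in $k$, while $k\mapsto\ln(e^{k^2a}-1)$ is strictly convex in $k$ on $\mathbb{R}^+$ — the latter because $k\mapsto k^2 a$ is convex and increasing and $\ln(e^{u}-1)$, though concave, has derivative behaving like $1/u$ near $0$ and $\to 1$ at $\infty$, which when precomposed with $k^2a$ yields a positive second derivative after the elementary computation $\frac{d^2}{dk^2}\ln(e^{k^2a}-1)=\frac{2a e^{k^2a}(e^{k^2a}-1-2k^2a)}{(e^{k^2a}-1)^2}$ together with $e^s-1-2s>0$ failing for small $s$ — so this piece also needs care and the honest statement is that the \emph{difference} $\ln(e^{k^2a}-1)-2\ln(e^{ka}-1)$ is convex, not each summand. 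I would therefore combine the explicit second-derivative formulas into a single inequality in the variable $s:=ka>0$ and $t:=k^2a=s\cdot k>0$, reduce to proving a one-variable inequality such as $\frac{e^t(e^t-1-2t)}{(e^t-1)^2}\cdot(\text{something})\ \ge\ \frac{e^s(e^s-1-s)}{(e^s-1)^2}\cdot(\text{something})$, and verify it by the auxiliary-function method used throughout Appendix \ref{AppendixB} (introduce the numerator as a new function, differentiate, locate its unique critical point at $0$, check the boundary value). Once $(f\circ g)''>0$ is confirmed, Lemma \ref{lemmaA2} closes the argument.
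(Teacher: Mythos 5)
Your overall strategy coincides with the paper's: take $f=\ln$, note $f'>0$ and $f''<0$ on $\mathbb{R}^+$, and invoke Lemma \ref{lemmaA2}, so that everything hinges on showing $(f\circ g)''>0$ on $\mathbb{R}^+$ for $(f\circ g)(k)=\ln(e^{k^2a}-1)-2\ln(e^{ka}-1)$, $a:=\|\hat{\varrho}\|^2T$. You also correctly diagnose why this is the hard part: neither summand is convex by itself, and the difference $\phi(k^2a)-\phi(ka)$ changes sign at $k=1$, so no termwise argument is available.

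The gap is that you stop exactly there. The positivity of $(f\circ g)''$ is the entire mathematical content of the theorem, and your proposal does not establish it: your displayed expression for the sign of $(f\circ g)''$ contains a literal placeholder, the target inequality is stated with unspecified factors ``$(\text{something})$'', and the closing sentence defers the verification to ``the auxiliary-function method''. Moreover, the reduction you sketch to monotonicity and convexity of $\phi(u)=\frac{ue^u}{e^u-1}$ is not obviously sufficient: writing $\psi(k):=\phi(k^2a)-\phi(ka)$, one has $(f\circ g)''(k)=\frac{2}{k^2}\bigl(k\psi'(k)-\psi(k)\bigr)$, and for small $k$ both $\psi$ and $\psi'$ are negative with their leading-order contributions to $k\psi'-\psi$ cancelling, so soft convexity of $\phi$ does not settle the sign and a genuine computation is required. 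For comparison, the paper carries this step out in full: it writes $h:=(f\circ g)''=2(h_1+h_2)$ with $h_1(x)=\frac{e^{x^2}}{e^{x^2}-1}-\frac{x^2e^{x^2}}{(e^{x^2}-1)^2}$ and $h_2(x)=\frac{e^x}{(e^x-1)^2}-\frac{x^2e^{x^2}}{(e^{x^2}-1)^2}$, proves $h_1>\frac{1}{2}$ by showing $h_1'>0$ and computing the limit at $0^+$, and proves $h_2>-\frac{1}{2}$ by reducing to $u_0(x)-u_0(x^2)+\frac{1}{2}x^2>0$ for $u_0(x)=\frac{x^2e^x}{(e^x-1)^2}$, which in turn needs a further chain of auxiliary functions. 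Until you supply an argument of comparable substance for $(f\circ g)''>0$, the proof is incomplete. (A minor point in your favor: you keep the constant $a$ explicit throughout, whereas the paper's opening claim that the statement ``is equivalent to'' the normalized case $a=1$ is not a clean rescaling.)
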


\begin{proof}
It is equivalent to prove $g(x)=\frac{e^{x^2}-1}{(e^x-1)^2}$ is a strictly convex function {on \mou$\mathbb R^+$}. Choosing $f(x)=\ln x$, then $f(g(x))=\ln(e^{x^2}-1)-2\ln(e^x-1)$. Denote its second order derivative by
\begin{equation}
h(x):=f(g(x))''=\dfrac{2e^{x^2}}{e^{x^2}-1}-\dfrac{4x^2e^{x^2}}{(e^{x^2}-1)^2}+\dfrac{2e^x}{(e^x-1)^2}.   \label{hx}
\end{equation}
We note that $h(x)$ can be decomposed as $h(x)=2(h_1(x)+h_2(x))$, where $h_1(x)=\frac{e^{x^2}}{e^{x^2}-1}-\frac{x^2e^{x^2}}{(e^{x^2}-1)^2}$ and $h_2(x)=\frac{e^x}{(e^x-1)^2}-\frac{x^2e^{x^2}}{(e^{x^2}-1)^2}$. We can consider the two terms separately. For $h_1(x)$, we have on $\mathbb R^+$
\begin{align}
  h_1(x) =& \dfrac{e^{x^2}(e^{x^2}-1-x^2)}{(e^{x^2}-1)^2} = 1+\dfrac{(1-x^2)e^{x^2}-1}{(e^{x^2}-1)^2} \label{h1x},\\
\nonumber  h_1'(x) =& \dfrac{-2x^3e^{x^2}(e^{x^2}-1)-((1-x^2)e^{x^2}-1)2xe^{x^2}}{(e^x-1)^2} = \dfrac{2x^3e^{x^2}}{(e^x-1)^2} >0,
\end{align}
so $h_1(x)$ is an increasing function {\mou on $\mathbb R^+$}. Therefore, $\lim\limits_{x\rightarrow 0^+}h_1(x)=\frac{1}{2}$ is the lower bound of $h_1(x)$ and $\lim\limits_{x\rightarrow +\infty}h_1(x)=1$ is the upper bound of $h_1(x)$ on $\mathbb{R}^+$.
We further define $u_0(x):=\frac{x^2e^x}{(e^x-1)^2}$. Then the second term $h_2(x)$ can be represented by
\begin{align}
  h_2(x) &= \dfrac{1}{x^2}(u_0(x)-u_0(x^2)) \label{h2x},\\
\label{du0x}  u_0'(x) &= \dfrac{2xe^x}{(e^x-1)^2}-\dfrac{2x^2e^{2x}}{(e^x-1)^3}+\dfrac{x^2e^x}{(e^x-1)^2}=\dfrac{xe^x[e^x(2-x)-(2+x)]}{(e^x-1)^3}<0\,\, \text{on}\,\, \mathbb R^+,\\
\nonumber  u_1(x) &:= e^x(2-x)-(2+x), \\
\nonumber  u_1'(x) &= (1-x)e^x-1<0 \ \text{on}\ \mathbb{R}^+,\\
\nonumber  u_1''(x) &=-xe^x < 0 \ \text{on}\ \mathbb{R}^+,
\end{align}
thus $u_1(x)$ and $u_1'(x)$ are strictly decreasing on $\mathbb R^+$, $u_1(0)=0$, $u_1'(0)=0$, $u_1(x)<0$ on $\mathbb{R}^+$, $u_0(x)$ is strictly decreasing, $u_0(0)=0$ and $u_0(x)<0$ on $\mathbb R^+$. In order to satisfy $h_2(x)+h_1(x)>0$ conditioning on $h_1(x)>\frac{1}{2}$ when $x>0$, it suffices to prove that $h_2(x)\geq-\frac{1}{2}$, which is, $u_0(x)-u_0(x^2)+\frac{1}{2}x^2\geq 0$ on $\mathbb R^+$. We let $F_0(x):=u_0(x)-u_0(x^2)+\frac{1}{2}x^2$, its derivative is $F_0'(x)=u_0'(x)-2xu_0'(x^2)+x$. It is known that $u_0'(x^2)<0$ for any $x\in \mathbb R^+$, so the second term in $F_0'(x)$ is positive on $\mathbb R^+$. We then define $F_1(x):=u_0'(x)+x$ and claim that $F_1(x)\geq0$ on $\mathbb R^+$. According to \eqref{du0x}, the claim is equivalent to
\begin{equation}\label{claimA5}
  e^x[e^x(2-x)-(2+x)]+(e^x-1)^3\geq0\,\, \text{on}\,\, \mathbb R^+.
\end{equation}
It is convenient to do the change of variable $y:=e^x$ and thus the left hand side of \eqref{claimA5} becomes
$$\xi(y):=y[y(2-\ln y)-(2+\ln y)]+(y-1)^3\, \text{ for $y\in (1,+\infty)$}.
$$
Next, we compute the derivatives of $\xi(y)$ and we have on $(1,+\infty)$
\begin{equation*}
  \xi'(y) = 3y(y-1)-(2y+1)\ln y>0.
\end{equation*}
Together with $\xi(1)=0$, we have $\xi(y)>0$ on $(1,+\infty)$ and thus the claim $F_1(x)\geq 0$ on $\mathbb R^+$ holds. Therefore, $F_0'(x)=F_1(x)-2xu_0'(x^2)>0$ on $\mathbb R^+$. Together with $F_0(0)=0$, we have $F_0(x)>0$. It implies that $h_2(x)=\frac{F_0(x)}{x^2}-\frac{1}{2}>-\frac{1}{2}$ on $\mathbb R^+$ and thus $h(x)$ defined in \eqref{hx} is positive on $\mathbb R^+$. According to Lemma \ref{lemmaA2}, we conclude that $g$ is strictly convex and the proof is completed. The graph of function $h(x), h_1(x), h_2(x)$ are plotted in Figure \ref{AppendixBgraph} (b).
\end{proof}

\begin{remark}
$g$ has second order derivative
\begin{equation*}
  g''(x) = \dfrac{(4x^2+2)e^{x^2}(e^x-1)^2-2e^x(e^{x^2}-1)(e^x-1)+6e^{2x}(e^{x^2}-1)-8xe^xe^{x^2}}{(e^x-1)^4}
\end{equation*}
and $g''(0)=0$. However, it is rather tedious to prove $g''(x)>0$ on $\mathbb R^+$. Instead, we plot the graph of $g''(x)$ in Figure \ref{AppendixBgraph} (a).
\end{remark}

\begin{figure}[thbp]
\centering
\subfloat[$g''(x)$]{\includegraphics[width=0.5\linewidth]{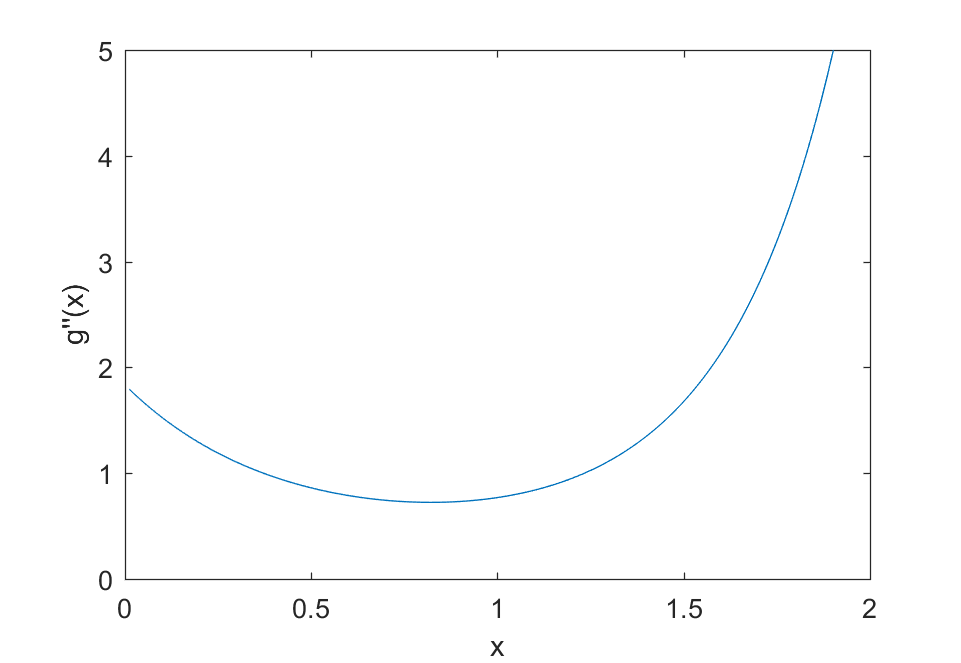}}
\subfloat[$h(x)$ and its accessory functions]{\includegraphics[width=0.45\linewidth]{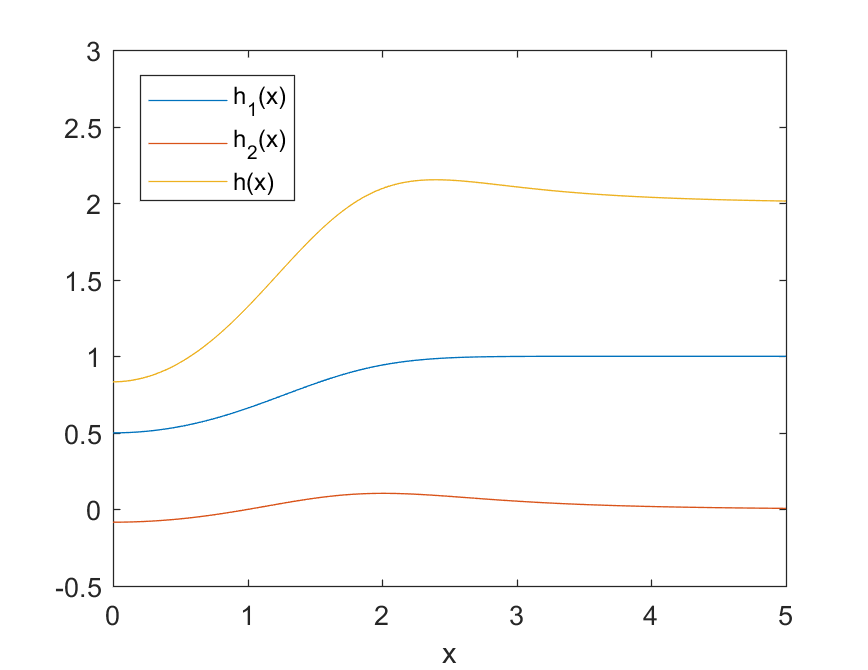}}
\caption{graph of functions in Theorem \ref{theoremA3}}
\label{AppendixBgraph}
\end{figure}

\end{document}